\theoremstyle{plain}
\newtheorem{thm}{Theorem}[section]
\newtheorem{prop}[thm]{Proposition}
\newtheorem{cor}[thm]{Corollary}
\newtheorem*{thm*}{Theorem}
\newtheorem*{cor*}{Corollary}
\theoremstyle{definition}
\newtheorem{dfn}[thm]{Definition}
\newtheorem{ex}[thm]{Example}
\newtheorem{ques}[thm]{Question}
\newtheorem*{conv1}{1}
\newtheorem*{conv2}{2}
\newtheorem*{conv3}{3}
\newtheorem{conj}[thm]{Conjecture}
\newtheorem*{conj*}{Conjecture}
\numberwithin{equation}{thm}
\def\Tor{\operatorname{Tor}}
\def\edim{\operatorname{edim}}
\def\e{\operatorname{e}}
\def\mod{\operatorname{\mathsf{mod}}}
\def\add{\operatorname{\mathsf{add}}}
\def\zero{\mathbf{0}}
\def\Image{\operatorname{Im}}
\def\ext{\operatorname{\mathsf{ext}}}
\def\filt{\operatorname{\mathsf{filt}}}
\def\m{\mathfrak m}
\def\a{\mathfrak a}
\def\X{{\mathcal X}}
\def\Y{{\mathcal Y}}
\def\c{{\mathcal C}}
\begin{document}
\setlength{\baselineskip}{15pt}
\title[A nontrivial extension-closed subcategory]{When is there a nontrivial extension-closed subcategory?}
\author{Ryo Takahashi}
\address{Department of Mathematical Sciences, Faculty of Science, Shinshu University, 3-1-1 Asahi, Matsumoto, Nagano 390-8621, Japan}
\email{takahasi@math.shinshu-u.ac.jp}
\thanks{2010 {\em Mathematics Subject Classification.} 13C60}
\thanks{{\em Key words and phrases.} extension-closed subcategory, stretched Artinian local ring, extension closure}
\begin{abstract}
Let $R$ be a commutative Noetherian local ring, and denote by $\mod R$ the category of finitely generated $R$-modules.
In this paper, we consider when $\mod R$ has a nontrivial extension-closed subcategory.
We prove that this is the case if there are part of a minimal system of generators $x,y$ of the maximal ideal with $xy=0$, and that it holds if $R$ is a stretched Artinian Gorenstein local ring which is not a hypersurface.
\end{abstract}
\maketitle

\section*{Introduction}

Let $R$ be a commutative Noetherian local ring with maximal ideal $\m$.
Denote by $\mod R$ the category of finitely generated $R$-modules.
An {\em extension-closed} subcategory of $\mod R$ is by definition a nonempty strict full subcategory of $\mod R$ closed under direct summands and extensions.
The zero $R$-module, the finitely generated free $R$-modules and all the finitely generated $R$-modules form extension-closed subcategories of $\mod R$, respectively.
We call these three subcategories {\em trivial} extension-closed subcategories of $\mod R$.

In this paper, we consider when there are only trivial extension-closed subcategories and when a nontrivial one exists.
In the case where $R$ is an Artinian hypersurface, all the extension-closed subcategories of $\mod R$ are trivial.
Our conjecture is that the converse also holds true.

\begin{conj*}
The following are equivalent.
\begin{enumerate}[(1)]
\item
$R$ is an Artinian hypersurface.
\item
$\mod R$ has only trivial extension-closed subcategories.
\end{enumerate}
\end{conj*}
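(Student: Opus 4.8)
The plan is to prove the two implications separately. For $(1)\Rightarrow(2)$ I would proceed as follows: an Artinian hypersurface has $\edim R\le1$, so $\m=(t)$ is principal and $R$ is a serial (uniserial) ring whose indecomposable modules are exactly the cyclics $R/\m^i$ for $1\le i\le n$, where $\m^n=0\ne\m^{n-1}$ and $R/\m^n=R$ is the unique free one. The Auslander--Reiten sequences are explicit, $0\to R/\m^i\to R/\m^{i+1}\oplus R/\m^{i-1}\to R/\m^i\to0$ for $1\le i\le n-1$ (reading $R/\m^0=0$). Hence if $\X$ is extension-closed and contains a non-free module, it contains some $R/\m^i$ with $i\le n-1$; feeding this into its Auslander--Reiten sequence and taking a summand gives $R/\m^{i-1}\in\X$, so by descent $R/\m=k\in\X$. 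Since $0\to R/\m^i\xrightarrow{\,t\,}R/\m^{i+1}\to k\to0$ realizes each $R/\m^{i+1}$ as an extension of $k$ by $R/\m^i$, an induction puts every $R/\m^i$, and in particular $R=R/\m^n$, into $\X$; thus $\X=\mod R$. The only extension-closed subcategories are therefore $\zero$, $\add R$ and $\mod R$, all trivial.

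For the converse $(2)\Rightarrow(1)$ I would prove the contrapositive: assuming $R$ is not an Artinian hypersurface, I must build a nontrivial extension-closed subcategory. If $\dim R\ge1$, the subcategory $\operatorname{\mathsf{fl}}R$ of finite-length modules does the job: it is closed under summands and extensions, it is nonzero (containing $k$), it is proper (omitting $R$), and it differs from $\add R$ (its only free object is $\zero$). This reduces everything to the case where $R$ is Artinian with $\edim R\ge2$.

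For Artinian $R$ with $\edim R\ge2$ the aim is to find a non-free module $M$ whose extension closure $\filt(\add M)$ is proper, since nontriviality is then automatic: $\filt(\add M)$ contains the non-free $M$, so it is neither $\zero$ nor $\add R$. The two results quoted in the abstract already settle two families -- when part of a minimal generating set satisfies $xy=0$, and when $R$ is stretched Gorenstein. A third, easy family comes from complexity: $\X=\{M:\cx_R M\le1\}$ is closed under summands and extensions, it omits $k$ (because $\cx_R k\ge2$ for a non-hypersurface) and is thus proper, and it is nontrivial whenever a non-free module of complexity $1$ exists, as happens for complete intersections of codimension $\ge2$. These families interlock: the complexity subcategory degenerates to $\add R$ exactly for rings with no complexity-$1$ module, such as those with $\m^2=0$ and $\edim R\ge2$ (where the Betti numbers of every non-free module grow like $(\edim R)^i$); but any such ring has two minimal generators $x,y$ with $xy\in\m^2=0$, so it is caught by the first quoted theorem, whose hypothesis uses the relation $xy=0$ to make $x$ act as zero and $y$ as a well-defined nilpotent endomorphism on $R/xR$, from which a proper extension-closed subcategory is assembled by controlling the action of $x$ and $y$ on the layers of a filtration.

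The main obstacle is uniformity over all Artinian non-hypersurfaces. Since $R$ is Artinian, $G_0(R)\cong\mathbf Z$ via length, so length is the only invariant additive on all short exact sequences, and it cannot detect that an extension closure is proper: every module of every length is a priori an admissible subquotient. One is thus forced to control the admissible filtrations directly, and that control is ring-specific -- complexity governs some rings but provably fails when $\m^2=0$, while $xy=0$ and the stretched Gorenstein structure govern two further families. A general Artinian non-hypersurface need satisfy none of these structural hypotheses, so no single construction is presently known to cover every case; this is exactly why $(2)\Rightarrow(1)$ remains only conjectural and is verified here in the two indicated situations.
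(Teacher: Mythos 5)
Your proposal is correct exactly as far as the paper itself goes, and by essentially the same route: the statement is the paper's open Conjecture, and like the paper you fully prove only $(1)\Rightarrow(2)$ --- your sequences $0\to R/\m^i\to R/\m^{i+1}\oplus R/\m^{i-1}\to R/\m^i\to 0$ are literally the explicit exact sequences in the paper's Proposition on Artinian hypersurfaces (the Auslander--Reiten label is an optional gloss), while your reduction of the converse to the Artinian case via the finite-length subcategory, the bounded-Betti-number (complexity) subcategory for complete intersections, and the $xy=0$ and stretched Gorenstein cases reproduce the paper's Propositions \ref{artgor} and \ref{bergh}, Theorem \ref{main} and Corollaries \ref{bcor}, \ref{sagnotes}, \ref{strcor}. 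Your closing admission that no uniform construction handles an arbitrary Artinian non-hypersurface, so that $(2)\Rightarrow(1)$ remains conjectural outside these families, is precisely the status of the statement in the paper.
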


Both conditions in this conjecture imply that $R$ is an Artinian Gorenstein local ring.
The conjecture holds if $R$ is a complete intersection.

The main result of this paper is the following theorem.

\begin{thm*}
Let $x,y$ be part of a minimal system of generators of $\m$ with $xy=0$.
Then $R/\m$ does not belong to the smallest extension-closed subcategory of $\mod R$ containing $R/(x)$, and hence it is a nontrivial extension-closed subcategory.
\end{thm*}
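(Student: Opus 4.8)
The plan is to exhibit a strict full subcategory $\Y$ of $\mod R$ that is closed under direct summands and extensions, contains $R/(x)$, and does not contain $R/\m$; since by definition $\ext(R/(x))$ is the smallest extension-closed subcategory containing $R/(x)$, this forces $\ext(R/(x))\subseteq\Y$ and hence $R/\m\notin\ext(R/(x))$. A first remark that should guide the search for $\Y$ is that it cannot be a Serre subcategory, nor more generally be cut out by a support or annihilator condition: every nonzero finitely generated module has $\m$ in its support, so $R/\m$ and $R/(x)$ have comparable supports and no specialization-closed subset can separate them. Thus $\Y$ must be genuinely homological, closed under extensions and summands but not under submodules or quotients.

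The natural source of such subcategories is the vanishing of a fixed-degree (co)homological functor, and the hypothesis $xy=0$ is what I would use to manufacture one adapted to $R/(x)$. Since $xy=0$ one has $yM\subseteq(0:_Mx)$ for every $M$, so multiplication by $y$ factors through a natural map $\bar y\colon M\to(0:_Mx)=\operatorname{Hom}_R(R/(x),M)$; this yields candidate half-exact functors such as $\Tor_1^R(R/(y),-)$, $\operatorname{Hom}_R(R/(x,y),-)$, or the homology of the short complex $M\xrightarrow{x}M\xrightarrow{y}M$, each of whose fixed-degree vanishing is automatically closed under extensions and summands. The minimality of the generating system enters through $y\notin(x)$ and $x\notin(y)$, which is what lets these functors detect a difference between $R/(x)$ and $R/\m$: for instance $\Tor_1^R(R/(y),R/\m)\cong(y)/\m(y)\cong k$ is nonzero. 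I would then try to prove that the corresponding value on $R/(x)$ vanishes, so that $\Y=\{M:\Tor_1^R(R/(y),M)=0\}$, or a variant of it, separates the two modules.

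The hard part will be verifying such a vanishing on $R/(x)$ in full generality, and this is where the proof has real content. The functors above all measure something like $(x:y)/(x)$ or $(x)\cap(y)$, and these need not vanish: in the complete intersection $R=k[x,y]/(xy,\,x^{2}-y^{2})$ one has $(x)\cap(y)=(x^{2})\neq0$ and $R/(x)\cong k[y]/(y^{2})$, so $\m$ is associated to $R/(x)$, $\operatorname{depth}R/(x)=0$, $\operatorname{soc}(R/(x))\neq0$, and $y$ is a zerodivisor on $R/(x)$; every length, socle, or degree-one $\Tor$ invariant I have tried fails to separate $R/(x)$ from $R/\m$ there. The essential difficulty is exactly this depth-zero, eventually periodic regime. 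To handle it I would replace vanishing by a growth bound on the minimal free resolution: the subcategory $\Y=\{M:\cx_R M\le\cx_R(R/(x))\}$ is closed under extensions and summands because $\cx$ is subadditive on short exact sequences and monotone under summands, so it would suffice to establish the strict inequality $\cx_R(R/(x))<\cx_R(R/\m)$. The relation $xy=0$ supplies $y\in(0:x)$, which I would use to cut down the syzygies of $R/(x)$ exactly as in the exact-zero-divisor case, where $R/(x)$ is periodic and $\cx_R(R/(x))=1$, while the non-hypersurface behaviour forced by $x,y$ being two independent generators keeps $\cx_R(R/\m)$ strictly larger. Making this comparison precise over an arbitrary Noetherian local ring, where complexities may well be infinite, is the main obstacle I anticipate.
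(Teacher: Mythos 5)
Your proposal does not reach a proof, and the fallback you settle on is provably false in general. The first candidates (vanishing of $\Tor_1^R(R/(y),-)$ and its variants) you correctly reject yourself. The complexity-threshold subcategory $\Y=\{M:\cx_RM\le\cx_R(R/(x))\}$ is indeed extension-closed and closed under summands, but the strict inequality $\cx_R(R/(x))<\cx_Rk$ that your argument needs fails already in the simplest instance of the theorem's hypothesis: take $R$ with $\m^2=0$ and $\edim R=e\ge 2$, e.g.\ $R=k[x,y]/(x,y)^2$, where \emph{any} two minimal generators satisfy $xy=0$. There $(0:x)=\m\cong k^e$, so the minimal free resolution of $R/(x)$ has Betti numbers $\beta_0=1$ and $\beta_i=e^{i-1}$ for $i\ge 1$, while $\beta_i(k)=e^i$. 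Both modules have infinite complexity, and even the finer growth invariants (curvature, or any asymptotic bound of the form $\beta_i(M)\le Ce^{i-1}$) assign the same value to $R/(x)$ and to $k$, since $\beta_i(k)=e\cdot\beta_i(R/(x))$. So no numerical growth cutoff on minimal resolutions can separate the two modules, and the ``main obstacle'' you flag at the end is not a technicality but a fatal one. Note also that the exact-zero-divisor intuition you invoke ($R/(x)$ periodic of complexity one) requires $(0:x)=(y)$, which $xy=0$ alone does not give; in the example above $(0:x)=\m\supsetneq(y)$.

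The paper's proof avoids this entire framework: it never constructs a larger separating extension-closed subcategory, because (as your own analysis suggests) natural functorial or numerical candidates fail. Instead it works directly inside $\ext_R(R/(x))$. First, $\ext_R(R/(x))$ is identified with the direct summands of modules admitting a finite filtration with all subquotients isomorphic to $R/(x)$ (Proposition \ref{extfilt}). Second, every such filtered module $M\in\filt_R^n(R/(x))$ is shown by induction to have a presentation by an $n\times n$ upper-triangular matrix with $x$ on the diagonal, whose $j$-th off-diagonal column multiplied by $(0:_Rx)$ lands in the image of the preceding $(j-1)\times(j-1)$ block (Proposition \ref{matrix}). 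If $k$ were a summand of such an $M$, the relation $y\in(0:x)$ together with the linear independence of $x,y$ in $\m/\m^2$ forces all off-diagonal entries into $\m$; column operations then push them into $I=(y,z_3,\dots,z_e)$, and reducing modulo $I$ --- over which $\m/I$ is principal, generated by $x$ --- gives $M/IM\cong k^n$. A count of first Betti numbers ($\beta_1^{R/I}(N/IN)\le\beta_1^R(N)$, $\beta_1^R(M)=e+\beta_1^R(N)\le n$, $\beta_1^{R/I}(k)=1$) yields $n-1\le n-e$, i.e.\ $e\le 1$, contradicting the hypothesis. The lesson is that the obstruction is not asymptotic but lives in degree one of the presentation, extracted from the specific triangular shape that the filtration imposes --- information invisible to any invariant of the eventual resolution.
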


Let $R$ be an Artinian local ring of length $l$ with embedding dimension $e$.
Recall that $R$ is said to be {\em stretched} if $\m^{l-e}\ne 0$.
An Artinian Gorenstein local ring which is not a field and the cube of whose maximal ideal is zero is an example of a stretched Artinian Gorenstein local ring.
The above theorem yields the following corollary, which guarantees that our conjecture holds when $R$ is a stretched Artinian Gorenstein local ring.

\begin{cor*}
Let $R$ be a stretched Artinian Gorenstein local ring.
Then the following are equivalent.
\begin{enumerate}[\rm (1)]
\item
$R$ is an Artinian hypersurface.
\item
$\mod R$ has only trivial extension-closed subcategories.
\end{enumerate}
\end{cor*}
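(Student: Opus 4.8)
The plan is to treat the two implications separately; all the content is in $(2)\Rightarrow(1)$, since $(1)\Rightarrow(2)$ is already recorded in the Introduction (an Artinian hypersurface has only trivial extension-closed subcategories). For $(2)\Rightarrow(1)$ I would argue contrapositively: assuming $R$ is stretched Artinian Gorenstein but \emph{not} a hypersurface, I will exhibit a nontrivial extension-closed subcategory of $\mod R$. An Artinian local ring is a hypersurface precisely when $\edim R\le 1$, so the hypothesis gives $e:=\edim R\ge 2$. By the Theorem it is then enough to find $x,y\in\m\setminus\m^2$ with $xy=0$: for such a pair the smallest extension-closed subcategory containing $R/(x)$ is nontrivial, as it omits $R/\m$.

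The one structural fact I need is that stretchedness makes $\m^2$ principal. If $l$ denotes the length, then $\m^{l-e}\neq 0$ is equivalent to the Hilbert function being $(1,e,1,1,\dots,1)$; in particular $\m^2/\m^3$ is one-dimensional over $R/\m$, so $\m^2$ is principal by Nakayama. (Gorensteinness plays no role in this step.) Fixing a minimal generating system $x_1,\dots,x_e$ of $\m$, the products $x_ix_j$ generate $\m^2$ and therefore span the one-dimensional space $\m^2/\m^3$, so some product $x_ax_b$ already generates $\m^2$.

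The remaining work is a cancellation that promotes a relation holding modulo $\m^3$ to one holding exactly. Suppose first that a \emph{square} generates, say $\m^2=(x_1^2)$ after reindexing. For each $i\ge 2$ one may write $x_1x_i=u_ix_1^2$ with $u_i\in R$, and replacing $x_i$ by $x_i-u_ix_1$ (a change that preserves minimality of the system) yields $x_1x_i=0$. Then $x:=x_1$ and $y:=x_2$ are linearly independent minimal generators with $xy=0$, and the Theorem applies. Since a quadratic-form computation on $\m/\m^2$ (if every square lies in $\m^3$ then the polar form $B(u,v)=uv$ satisfies $2B\equiv 0$) shows that some square must generate $\m^2$ whenever $\mathrm{char}(R/\m)\neq 2$, this already settles all residue fields of characteristic different from $2$.

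The genuine obstacle is the residual characteristic-$2$ case, where it can happen that \emph{no} square generates $\m^2$; the basic example is $R=k[[x,y]]/(x^2,y^2)$ with $k$ of characteristic $2$, in which $xy\neq 0$ while $x^2=y^2=0$, and in which no two linearly independent generators have vanishing product. Here I would instead take a cross product as generator, $\m^2=(x_1x_2)$, and use $x_1^2\in\m^3=(x_1x_2)\m$ to write $x_1^2=x_1x_2t$ with $t\in\m$; then $u:=x_1-x_2t$ is again a minimal generator (it is congruent to $x_1$ modulo $\m^2$) and $x_1u=0$, so $x:=x_1$ and $y:=u$ lie in $\m\setminus\m^2$ and satisfy $xy=0$. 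The subtle point to confirm is therefore that the Theorem's hypothesis should be read as requiring only that $x$ and $y$ each be minimal generators, not that they be linearly independent; the example above shows this latitude is unavoidable. Granting that reading, the Theorem produces the desired nontrivial subcategory in every case, completing $(2)\Rightarrow(1)$.
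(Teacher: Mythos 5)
Your reduction to Theorem \ref{main} works cleanly only away from residue characteristic $2$, and the characteristic-$2$ branch contains a genuine gap that you yourself flag but then wave through. The hypothesis of Theorem \ref{main}, ``$x,y$ are part of a minimal system of generators of $\m$,'' means precisely that $\overline{x},\overline{y}$ are linearly independent in $\m/\m^2$, and the proof uses this essentially: from $\overline{c_{i,j}}\cdot\overline{y}-\overline{x}\cdot\overline{d_{i,j}}=\overline{0}$ in $\m/\m^2$ one concludes $\overline{c_{i,j}}=\overline{d_{i,j}}=0$ only because $\overline{x},\overline{y}$ are part of a $k$-basis; if $\overline{y}=\overline{x}$ one gets merely $\overline{c_{i,j}}=\overline{d_{i,j}}$, and the subsequent steps (column operations pushing the $b_{i,j}$ into $I=(y,z_3,\dots,z_e)$, and $\beta_1^{R/I}(k)=1$ via $\m/I=x(R/I)$) collapse. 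Your characteristic-$2$ pair $x=x_1$, $y=x_1-x_2t$ satisfies $\overline{x}=\overline{y}$, so it does not meet the theorem's hypothesis, and ``granting that reading'' is not available: your own example $R=k[[x,y]]/(x^2,y^2)$ with $\operatorname{char}k=2$ shows that \emph{no} pair of generators independent modulo $\m^2$ has product zero there (for independent $u,v$ one computes $uv=(ab'+a'b)xy\ne 0$), so the theorem as stated simply cannot be applied to such rings, and a different construction is forced. (In that example the conclusion $k\notin\ext_R(R/(x))$ does happen to hold, but via bounded Betti numbers, not via the theorem.)

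The paper's proof of the underlying Corollary \ref{sagnotes} is organized differently and supplies exactly the missing device. When $\edim R<\ell(R)-2$ it invokes Sally's structure theorem for stretched Gorenstein rings to produce an honest pair $x,y$; when $\edim R\ge\ell(R)-2$ (so $\m^3=0$, which is where your characteristic-$2$ obstruction lives) it picks $x\in\m\setminus\m^2$ and splits on whether $(0:x)\subseteq(x)+\m^2$. If not, one extracts $y\in(0:x)$ independent of $x$ modulo $\m^2$ and applies Theorem \ref{main}; if so, the Gorenstein double annihilator property forces $(0:x)=(x)$, giving the periodic resolution $\cdots\overset{x}{\to}R\overset{x}{\to}R\to R/(x)\to 0$, and then the subcategory of modules with bounded Betti numbers is nontrivial (it contains the nonfree module $R/(x)$ but not $k$, since $R$ is not a hypersurface). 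So Gorensteinness, which you assert ``plays no role,'' is precisely what rescues the residual case. Your Hilbert-function observation that stretchedness forces $\m^2$ principal, and the polarization argument producing a generating square when $\operatorname{char}k\ne 2$, are a nice elementary substitute for Sally's theorem in that case; to complete the proof you should replace your characteristic-$2$ patch with the paper's colon-ideal dichotomy (or an equivalent bounded-Betti-number argument).
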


\section*{Convention}

\begin{conv1}
Throughout the rest of this paper, we assume that all rings are commutative Noetherian local rings, and that all modules are finitely generated.
Let $R$ be a commutative Noetherian local ring.
We denote by $\m$ the maximal ideal of $R$, by $k$ the residue field of $R$ and by $\mod R$ the category of finitely generated $R$-modules.
\end{conv1}

\begin{conv2}
Let $\c$ be a category.
In this paper, by a {\em subcategory} of $\c$, we always mean a nonempty strict full subcategory of $\c$.
(Recall that a subcategory $\X$ of $\c$ is said to be {\em strict} if every object of $\c$ that is isomorphic in $\c$ to some object of $\X$ belongs to $\X$.)
By the {\em subcategory} of $\c$ consisting of objects $\{M_\lambda\}_{\lambda\in\Lambda}$, we always mean the smallest strict full subcategory of $\c$ to which $M_\lambda$ belongs for all $\lambda\in\Lambda$.
Note that this coincides with the full subcategory of $\c$ consisting of all objects $X\in\c$ such that $X\cong M_\lambda$ for some $\lambda\in\Lambda$.
\end{conv2}

\begin{conv3}
We will often omit a letter indicating the base ring if there is no fear of confusion.
\end{conv3}

\section{Some observations}

We begin with recalling the precise definition of an extension-closed subcategory of $\mod R$.

\begin{dfn}
Let $\X$ be a subcategory of $\mod R$.
We say that $\X$ is {\em extension-closed} if $\X$ satisfies the following two conditions.
\begin{enumerate}[(1)]
\item
$\X$ is closed under direct summands:
if $M$ is an $R$-module in $\X$ and $N$ is a direct summand of $M$, then $N$ is also in $\X$.
\item
$\X$ is closed under extensions:
for every exact sequence $0 \to L \to M \to N \to 0$ of $R$-modules, if $L$ and $N$ are in $\X$, then $M$ is also in $\X$.
\end{enumerate}
\end{dfn}

For an $R$-module $X$, we denote by $\add_RX$ the {\em additive closure} of $X$, namely, the smallest subcategory of $\mod R$ containing $X$ which is closed under finite direct sums and direct summands.
This is nothing but the subcategory of $\mod R$ consisting of all direct summands of finite direct sums of copies of $X$.
Note that the additive closure $\add_RR$ of $R$ is the same as the subcategory of $\mod R$ consisting of all free $R$-modules.

We call the subcategory of $\mod R$ consisting of the zero $R$-module the {\em zero subcategory} of $\mod R$, and denote it by $\zero$.
Clearly,
$$
\zero,\ \add R,\ \mod R
$$
are all extension-closed subcategories of $\mod R$.
We call these three subcategories trivial extension-closed subcategories of $\mod R$.

\begin{dfn}
We say that $\mod R$ {\em has only trivial extension-closed subcategories} if all the extension-closed subcategories of $\mod R$ are $\zero$, $\add R$ and $\mod R$.
If there exists an extension-closed subcategory of $\mod R$ other than these three, then we say that $\mod R$ {\em has a nontrivial extension-closed subcategory}.
\end{dfn}

Over an Artinian hypersurface, there exists no nontrivial extension-closed subcategory.

\begin{prop}\label{mainp}
If $R$ is an Artinian hypersurface, then $\mod R$ has only trivial extension-closed subcategories.
\end{prop}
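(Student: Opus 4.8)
The plan is to reduce the statement to a transparent combinatorial fact about cyclic modules, after first pinning down exactly which rings are in question.

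First I would record two soft reductions. Any extension-closed subcategory $\X$ is closed under finite direct sums (the split sequence $0\to A\to A\oplus B\to B\to 0$ shows this) and, being nonempty and summand-closed, contains $\zero$. Hence if $\X$ contains a nonzero free module it contains $R$, so $\add R\subseteq\X$, while if $\X$ consists only of free modules then $\X=\add R$. Moreover, since $R$ is Artinian, every module admits a composition series with all factors isomorphic to $k$, so an induction along such a series (using closure under extensions) shows that once $k\in\X$ one has $\X=\mod R$. Thus it suffices to prove: if an extension-closed $\X$ contains a non-free module, then $k\in\X$. By Krull--Schmidt together with summand-closure I may assume $\X$ contains a non-free \emph{indecomposable} module.

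The key structural point is that an Artinian hypersurface is uniserial. We may assume $R$ is not a field (otherwise $\mod R$ has no non-free module and there is nothing to prove). Writing $R\cong S/(f)$ with $S$ regular local and $f\neq0$, the equality $\dim S=\dim R+1=1$ forces $S$ to be a discrete valuation ring, so $\edim R\le 1$ and $R\cong S/(t^n)$ is a chain ring with $\m=(x)$ principal and $x^n=0$. By the structure theory of modules over a discrete valuation ring, every $R$-module is a direct sum of copies of the cyclic modules $M_i:=R/(x^i)$ for $1\le i\le n$, where $M_n=R$ and $M_1=k$; in particular a non-free indecomposable summand has the form $M_i$ with $1\le i\le n-1$.

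The technical heart is an explicit descending family of extensions. For $2\le i\le n-1$ I would exhibit a (necessarily nonsplit) short exact sequence $0\to M_i\to M_{i-1}\oplus M_{i+1}\to M_i\to 0$, taking the left map to send $1\mapsto(\bar 1,\bar x)$ and the right map to send $(\bar a,\bar b)\mapsto\overline{ax-b}$, and verifying exactness by a length count. Given $M_i\in\X$ with $1\le i\le n-1$, closure under extensions puts $M_{i-1}\oplus M_{i+1}$ into $\X$, and closure under summands then puts $M_{i-1}$ into $\X$; descending induction on $i$ reaches $M_1=k$, and the reductions above finish the argument. The subtle aspect to watch is that extension-closure only ever builds \emph{larger} modules, so the whole descent hinges on the middle term $M_{i-1}\oplus M_{i+1}$ decomposing; this decomposition is precisely where the hypersurface (equivalently, uniserial) hypothesis is used, and it is the step I expect to require the most care to set up cleanly.
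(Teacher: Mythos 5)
Your proposal is correct and is essentially the paper's own proof: the paper likewise reduces via the DVR structure theorem to the cyclic modules $R/(x^i)$ and uses exactly the same extension $0 \to R/(x^i) \to R/(x^{i-1})\oplus R/(x^{i+1}) \to R/(x^i) \to 0$ with the same maps $\bar a\mapsto(\bar a,\overline{ax})$ and $(\bar a,\bar b)\mapsto\overline{ax-b}$. The only (inessential) difference is the finish: you descend to $M_1=k$ and then climb a composition series, while the paper runs the induction in both directions to collect all $R/(x^i)$ up to $R/(x^n)=R$ and invokes $\mod R=\add_R(R\oplus R/(x)\oplus\cdots\oplus R/(x^{n-1}))$.
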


\begin{proof}
This is proved in \cite[Proposition 5.6]{stcm}.
For the convenience of the reader, we give here a proof.
There exist a discrete valuation ring $S$ with maximal ideal $(x)$ and a positive integer $n$ such that $R$ is isomorphic to $S/(x^n)$.
Applying to $S$ the structure theorem for finitely generated modules over a principal ideal domain, we have
$$
\mod R=\add_R(R\oplus R/(x)\oplus R/(x^2)\oplus\cdots\oplus R/(x^{n-1})).
$$
Let $\X$ be an extension-closed subcategory of $\mod R$.
Suppose that $\X$ is neither $\zero$ nor $\add R$.
Then $\X$ contains $R/(x^l)$ for some $1\le l\le n-1$.
For each integer $1\le i\le n-1$ there exists an exact sequence
$$
0 \to R/(x^i) \overset{f}{\to} R/(x^{i-1})\oplus R/(x^{i+1}) \overset{g}{\to} R/(x^i) \to 0
$$
of $R$-modules, where $x^0:=1$, $f(\overline a)=\binom{\overline a}{\overline{ax}}$ and $g(\binom{\overline a}{\overline b})=\overline{ax-b}$.
Hence $\X$ contains both $R/(x^{l-1})$ and $R/(x^{l+1})$.
An inductive argument implies that $\X$ contains $R/(x), R/(x^2), \dots, R/(x^{n-1}), R/(x^n)=R$.
Therefore $\X$ coincides with $\mod R$.
\end{proof}

We conjecture that the converse of Proposition \ref{mainp} also holds.
The main purpose of this paper is to study this conjecture.

\begin{conj}\label{mainc}
If $\mod R$ has only trivial extension-closed subcategories, then $R$ is an Artinian hypersurface.
\end{conj}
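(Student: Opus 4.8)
The plan is to describe the smallest extension-closed subcategory $\X$ containing $R/(x)$ explicitly and then exclude $k:=R/\m$ by a socle computation. First I would record that $\X$ coincides with the subcategory of all direct summands of modules in $\filt(R/(x))$, the class of modules admitting a finite filtration whose every subquotient is isomorphic to $R/(x)$. Indeed $\filt(R/(x))$ is closed under extensions by construction, and a short argument—given $0\to X\to Z\to Y\to 0$ with $X$ a summand of $F\in\filt(R/(x))$ and $Y$ a summand of $G\in\filt(R/(x))$, add back the complementary summands and splice the two filtrations to see that $Z$ is a summand of a module in $\filt(R/(x))$—shows the summand-closure of $\filt(R/(x))$ is already extension-closed, hence equals $\X$. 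Granting the main claim below, nontriviality of $\X$ is then immediate: $R/(x)$ is a nonzero non-free module (it is cyclic with nonzero annihilator), so $\X\ne\zero,\add R$, while $k\notin\X$ gives $\X\ne\mod R$. Thus it suffices to prove that $k$ is not a direct summand of any $F\in\filt(R/(x))$.

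Next I would reduce this to a statement about socles. Over a local ring, $k$ is a direct summand of $F$ if and only if $\operatorname{soc}F\not\subseteq\m F$: a socle element lying outside $\m F$ is a minimal generator spanning a split copy of $k$, and conversely any simple summand is generated by such an element. Hence the whole theorem comes down to the claim that for every $F\in\filt(R/(x))$ one has $\operatorname{soc}F\subseteq\m F$, i.e. no module filtered by copies of $R/(x)$ has a simple direct summand. The base case $F=R/(x)$ is the assertion $((x):_R\m)\subseteq\m$, which holds precisely because $y\in\m\setminus(x)$: if $r\m\subseteq(x)$ with $r$ a unit, then $\m=(x)$, forcing $y\in(x)$, a contradiction.

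I would then induct on the length of a filtration, writing $0\to L\to F\xrightarrow{\pi}N\to 0$ with $L\cong R/(x)$ and $N\in\filt(R/(x))$ of smaller filtration length. Given $s\in\operatorname{soc}F$, its image $\pi(s)$ lies in $\operatorname{soc}N\subseteq\m N$ by induction, so $s=u+i(\ell)$ with $u\in\m F$ and $\ell\in L$, and the goal is to push $i(\ell)$ into $\m F$. Here the hypothesis $xy=0$ is the engine. Since $x$ annihilates $L\cong R/(x)$ we have $x\,i(\ell)=i(x\ell)=0$, and because $xy=0$ the element $xu$ loses its $y$-component; multiplying instead by $y$ relates $i(y\ell)$ to the deeper layer $y^2F$. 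Feeding these relations back through the base-case description of $\operatorname{soc}(R/(x))$, whose socle sits in the $y$-direction, should force $i(\ell)\in\m F$ and complete the induction.

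The main obstacle is exactly this inductive step: a priori an extension can manufacture a socle element that is a new minimal generator, and the delicate point is to show that $xy=0$ prevents a socle class of $F$ from escaping the submodule filtration, that is, to control $i(L)\cap\m F$. I expect to need a strengthened inductive hypothesis tracking not merely $\operatorname{soc}F\subseteq\m F$ but the finer position of the socle relative to $yF$ and to the $x$-torsion submodule $(0:_Fx)$ (note $yF\subseteq(0:_Fx)$ since $xy=0$); alternatively, passing to the Matlis dual converts the socle condition into the dual statement that $\filt$ of $(R/(x))^{\vee}$ creates no $k$-summand in the top, which may be more transparent. Handling minimal generators of $\m$ other than $x,y$ should be routine once the $x$–$y$ interaction is understood, since the only structural inputs are that $x$ annihilates $R/(x)$ and that $xy=0$.
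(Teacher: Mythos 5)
Your proposal proves the wrong statement. What is at issue here is the paper's Conjecture \ref{mainc} --- if $\mod R$ has only trivial extension-closed subcategories, then $R$ is an Artinian hypersurface --- and this is genuinely open in the paper, which establishes it only for complete intersections (Corollary \ref{bcor}) and for stretched Artinian Gorenstein rings (Corollary \ref{strcor}). You begin by assuming the existence of elements $x,y$ forming part of a minimal system of generators of $\m$ with $xy=0$; that is the \emph{hypothesis of Theorem \ref{main}}, not something that follows from the conjecture's hypothesis. A proof of the conjecture must argue contrapositively: from ``$R$ is not an Artinian hypersurface'' one must manufacture a nontrivial extension-closed subcategory, and the first step (the paper's Proposition \ref{artgor}) is to use the subcategories of finite-length modules and of injective modules to force $R$ to be Artinian Gorenstein. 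Even then such a pair $x,y$ with $xy=0$ need not exist: the paper can extract one only in the stretched case via Sally's structure theorem, and even there it must handle separately the case $\m^3=0$, $(0:x)=(x)$, where no such pair is available and it falls back on the extension-closed subcategory of modules with bounded Betti numbers (second half of the proof of Corollary \ref{sagnotes}). So even a complete version of your argument would recover only Theorem \ref{main} and the special cases, not the conjecture.

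Within the part you do address, your reduction is sound and matches the paper: $\ext_R(R/(x))$ consists of direct summands of modules in $\filt_R(R/(x))$ (Proposition \ref{extfilt}), and over a local ring $k$ is a direct summand of $F$ if and only if $\operatorname{soc}F\not\subseteq\m F$. But the inductive socle-tracking step is exactly where your proof stops, and it is a genuine gap, not a routine verification: from $s=u+i(\ell)$ with $u\in\m F$ you obtain only $\m\,i(\ell)\subseteq\m^2F$, and nothing in your setup forces $i(\ell)\in\m F$; you concede that a strengthened inductive hypothesis is needed but do not supply one. The paper's mechanism is quite different. It proves (Proposition \ref{matrix}) that every $M\in\filt_R^n(R/(x))$ admits an upper-triangular presentation matrix with $x$ on the diagonal, whose off-diagonal columns satisfy a compatibility condition against $(0:_Rx)$. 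Since $y\in(0:x)$ and $\overline{x},\overline{y}$ are linearly independent in $\m/\m^2$, a double induction forces all off-diagonal entries into $\m$; column operations then place them in $I=(y,z_3,\dots,z_e)$, and a count of first Betti numbers over $R/I$ (where $\beta_1^{R/I}(k)=1$ because $\m/I$ is principal) yields $n-1\le n-e$, i.e., $e\le 1$, the desired contradiction. If you wish to rescue your socle approach, the column condition of Proposition \ref{matrix} is plausibly what your ``finer position of the socle relative to $yF$ and $(0:_Fx)$'' would have to encode, but as written the induction does not close.
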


One can show that the assumption of Conjecture \ref{mainc} implies that $R$ is Artinian and Gorenstein.

\begin{prop}\label{artgor}
If $\mod R$ has only trivial extension-closed subcategories, then $R$ is an Artinian Gorenstein ring.
\end{prop}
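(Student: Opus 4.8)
The plan is to prove the contrapositive: if $R$ is not an Artinian Gorenstein ring, then $\mod R$ possesses a nontrivial extension-closed subcategory. I would split this into the case where $R$ is not Artinian and the case where $R$ is Artinian but not Gorenstein, exhibiting in each an explicit extension-closed subcategory $\X$ that is none of $\zero$, $\add R$, $\mod R$. The crux is choosing the right $\X$ in each regime and checking that its triviality would force $R$ to be Artinian Gorenstein.

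First, suppose $\dim R\ge 1$. I would take $\X$ to be the subcategory of $\mod R$ consisting of all modules of finite length. It is closed under direct summands, and closed under extensions because length is additive on short exact sequences, so $\X$ is extension-closed. It is not $\zero$ since $k\in\X$, and it is not $\mod R$ since $R$ has infinite length, whence $R\notin\X$. Finally it is not $\add R$: we have $k\in\X$, but $k$ is free only when $R$ is a field, which is excluded by $\dim R\ge 1$. Thus $\X$ is nontrivial, so the hypothesis forces $R$ to be Artinian.

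Next, assume $R$ is Artinian, and let $E=E_R(k)$ be the injective hull of $k$; over an Artinian local ring $E$ has finite length, so $E\in\mod R$. I would take $\X=\add_R E$. Any extension $0\to I\to M\to J\to 0$ of injective modules splits because $I$ is injective, whence $M\cong I\oplus J$ is again injective; since a direct summand of an injective module is injective, it follows that $\add_R E$ is extension-closed (its objects are exactly the finitely generated injective modules). As $E\ne 0$, we have $\X\ne\zero$, so the hypothesis forces $\X=\add R$ or $\X=\mod R$; in either case $R\in\X=\add_R E$, so $R$ is a direct summand of a finite direct sum of copies of $E$ and is therefore injective as a module over itself. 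An Artinian local ring that is injective over itself has finite self-injective dimension and is hence Gorenstein, completing the argument.

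The routine verifications (additivity of length, closure of injectives under extensions) are standard; the step carrying the real content is the last implication in the Artinian case, namely extracting Gorensteinness from the membership $R\in\add_R E$. I expect the main obstacle to be arguing cleanly, without leaning on the full structure theory of injective modules, that $\add_R E$ is extension-closed and that its triviality pins down $R$ as self-injective, equivalently $R\cong E$.
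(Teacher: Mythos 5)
Your proof is correct and takes essentially the same route as the paper: the subcategory of finite-length modules forces $R$ to be Artinian, and an extension-closed subcategory of injective modules forces $R$ to be self-injective, hence Gorenstein (the paper uses the subcategory of all finitely generated injective modules where you use $\add_R E_R(k)$, but over an Artinian local ring these coincide, and the splitting of extensions of injectives is the key verification in both). Your closing worry is unnecessary, since that splitting argument already shows $\add_R E_R(k)$ is extension-closed and $R\in\add_R E_R(k)$ directly yields that $R$ is injective over itself, with no appeal to the structure theory of injectives.
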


\begin{proof}
First, let $\X$ be the subcategory of $\mod R$ consisting of all $R$-modules of finite length.
Clearly, $\X$ is an extension-closed subcategory of $\mod R$.
Using the fact that $\X$ contains $k$ and our assumption, we easily deduce that $\X$ coincides with $\mod R$, which implies that $R$ is Artinian.

Next, let $\Y$ be the subcategory of $\mod R$ consisting of all injective $R$-modules.
It is obvious that $\Y$ is extension-closed, and the injective hull of $k$ belongs to $\Y$.
Our assumption implies that $\Y$ is equal to $\add R$, and we see that $R$ is Gorenstein.
\end{proof}

In the proposition below, we give a sufficient condition for $\mod R$ to have a nontrivial extension-closed subcategory.
This sufficient condition is a little complicated, but by using this, we will obtain some explicit sufficient conditions.

\begin{prop}\label{srmn}
Let $S\to R$ be a homomorphism of local rings.
Assume that there exist $R$-modules $M,N$ such that: 
\begin{itemize}
\item
$M$ is $S$-flat and not $R$-free,
\item
$N$ is not $S$-flat.
\end{itemize}
Then $\mod R$ has a nontrivial extension-closed subcategory.
\end{prop}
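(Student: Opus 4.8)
The plan is to produce an explicit extension-closed subcategory of $\mod R$ and then use the two hypothesized modules to certify that it is nontrivial. The natural candidate, given that the hypotheses are phrased in terms of flatness over $S$, is the full subcategory
$$
\X=\{\,X\in\mod R:X\text{ is flat as an }S\text{-module}\,\},
$$
where each $R$-module is regarded as an $S$-module by restriction of scalars along $S\to R$. This $\X$ is a strict full subcategory, since flatness over $S$ is an isomorphism-invariant property, and it is nonempty, as the zero module belongs to it.

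First I would verify the two closure conditions. Closure under direct summands is immediate: if $Y$ is an $R$-direct summand of an $S$-flat module $X$, then the $R$-linear splitting is in particular $S$-linear, so $Y$ is an $S$-direct summand of $X$, and a direct summand of a flat module is flat. For closure under extensions, I would take a short exact sequence $0\to L\to X\to Y\to 0$ in $\mod R$ with $L,Y\in\X$, restrict scalars to $S$, and invoke the long exact sequence of $\Tor^S(-,T)$: for every $S$-module $T$ the outer terms $\Tor_1^S(L,T)$ and $\Tor_1^S(Y,T)$ vanish, which forces $\Tor_1^S(X,T)=0$, so $X$ is $S$-flat and lies in $\X$. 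Hence $\X$ is extension-closed.

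It then remains to rule out that $\X$ equals one of $\zero$, $\add R$, or $\mod R$, and this is exactly where the two hypotheses enter. Because $M$ is $S$-flat we have $M\in\X$; because $M$ is not $R$-free it is in particular nonzero, so $\X\neq\zero$, and moreover $M\notin\add R$, so $\X\neq\add R$. Because $N$ is not $S$-flat we have $N\notin\X$ while $N\in\mod R$, so $\X\neq\mod R$. Thus $\X$ is a nontrivial extension-closed subcategory, as required.

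The proof is largely formal once the subcategory is chosen, so the real insight is simply to build $\X$ out of flatness over the auxiliary ring $S$ rather than out of any property intrinsic to $R$. The one step that requires genuine (if standard) homological input is closure under extensions; the hard part, such as it is, is recognizing that the stability of flatness in the middle term of a short exact sequence with flat outer terms is precisely what makes the $S$-flat modules extension-closed, and I would record this cleanly through the $\Tor$ long exact sequence above.
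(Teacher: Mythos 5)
Your proposal is correct and follows exactly the paper's argument: the paper also takes $\X$ to be the subcategory of all $S$-flat $R$-modules, notes (without the details you supply via the $\Tor^S$ long exact sequence) that it is extension-closed, and uses $M$ and $N$ to separate $\X$ from $\zero$, $\add R$, and $\mod R$. Your write-up merely fills in the routine verifications the paper leaves to the reader.
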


\begin{proof}
Let $\X$ be the subcategory of $\mod R$ consisting of all $S$-flat $R$-modules.
It is easy to see that $\X$ is an extension-closed subcategory of $\mod R$.
The existence of $M$ and $N$ shows that $\X$ does not coincide with any of $\zero$, $\add R$, $\mod R$.
\end{proof}

The following result is a direct consequence of Proposition \ref{srmn}.

\begin{cor}\label{sri}
Suppose that there exist a local subring $S\subsetneq R$ which is not a field and an ideal $I\subsetneq R$ such that the composition $S\to R\to R/I$ is an isomorphism.
Then $\mod R$ has a nontrivial extension-closed subcategory.
\end{cor}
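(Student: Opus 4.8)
The plan is to deduce the statement directly from Proposition~\ref{srmn}, applied to the inclusion of local rings $S\hookrightarrow R$. Thus everything reduces to producing two $R$-modules $M$ and $N$ such that $M$ is $S$-flat but not $R$-free, while $N$ is not $S$-flat.

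First I would unwind the hypothesis. Since the composition $S\to R\to R/I$ is an isomorphism, each $r\in R$ can be written uniquely as $r=s+i$ with $s\in S$ and $i\in I$; hence $R=S\oplus I$ as $S$-modules and $S\cap I=0$, and in particular $R/I\cong S$ as $S$-modules. Here $I\ne0$, for otherwise $S\to R$ would itself be an isomorphism, contradicting $S\subsetneq R$. Moreover $S\cong R/I$ is a homomorphic image of the Noetherian ring $R$, hence Noetherian, its maximal ideal is $\m_S=S\cap\m$, and $\m_S\ne0$ because $S$ is not a field.

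For $M$ I would take $M=R/I$. The isomorphism above shows $M\cong S$ as $S$-modules, so $M$ is $S$-free, in particular $S$-flat. On the other hand $M\ne0$ since $I\subsetneq R$, and $I$ annihilates $M$; as a nonzero free module is faithful, $M$ cannot be $R$-free. For $N$ I would take $N=k$. Since $\m_S\subseteq\m$ annihilates $k$, the module $k$ is a nonzero vector space over the field $S/\m_S$, and as an $S$-module it contains $S/\m_S$ as a direct summand. If $k$ were $S$-flat, then $S/\m_S$ would be $S$-flat as a direct summand of a flat module; tensoring $\m_S\hookrightarrow S$ with $S/\m_S$ would then yield an injection from $\m_S\otimes_S S/\m_S\cong\m_S/\m_S^2$ into $S/\m_S$ whose image $\m_S\cdot(S/\m_S)$ is zero, forcing $\m_S=\m_S^2$. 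As $S$ is Noetherian local with $\m_S\ne0$, this contradicts Nakayama's lemma, so $k$ is not $S$-flat.

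Feeding $M$ and $N$ into Proposition~\ref{srmn} then completes the argument. The only step that requires any genuine work is the non-flatness of $N=k$; the rest is the bookkeeping of the decomposition $R=S\oplus I$.
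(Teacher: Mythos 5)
Your proposal is correct and is exactly the paper's argument: the paper's proof of Corollary \ref{sri} is the single line ``Apply Proposition \ref{srmn} to $M=R/I$ and $N=k$,'' and you have simply supplied the routine verifications (the decomposition $R=S\oplus I$, faithfulness of nonzero free modules, and non-flatness of $k$ over $S$ via Nakayama) that the paper leaves to the reader. All of these checks are sound, so nothing needs to change.
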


\begin{proof}
Apply Proposition \ref{srmn} to $M=R/I$ and $N=k$.
\end{proof}

The next three results, which give explicit sufficient conditions for $\mod R$ to have a nontrivial extension-closed subcategory, are all deduced from Corollary \ref{sri}.

\begin{cor}\label{idealize}
Let $S$ be a local ring which is not a field and $N$ a nonzero $S$-module.
Let $R=S\ltimes N$ be the idealization of $N$ over $S$.
Then $\mod R$ has a nontrivial extension-closed subcategory.
\end{cor}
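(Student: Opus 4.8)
The plan is to apply Corollary \ref{sri} directly; the whole argument amounts to exhibiting the local subring and the ideal demanded by its hypotheses. Recall first the structure of the idealization: $R = S \ltimes N$ has underlying abelian group $S \oplus N$ with multiplication $(s,n)(s',n') = (ss', sn'+s'n)$. In particular, writing $\m_S$ for the maximal ideal of $S$, the ring $R$ is local with maximal ideal $\m_S \oplus N$, and $N$ (regarded as $\{0\}\oplus N$) is an ideal of $R$ with $N^2 = 0$.

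I would identify $S$ with the subring $\{(s,0)\mid s\in S\}$ of $R$ via the injective ring homomorphism $s\mapsto(s,0)$, and take $I=N=\{(0,n)\mid n\in N\}$. One checks readily that $\m_R\cap S=\m_S$, so $S$ really is a local subring of $R$; by hypothesis $S$ is not a field. Since $N\neq 0$, the inclusion $S\subsetneq R$ is proper, and since $R/N\cong S\neq 0$, the ideal $I=N\subsetneq R$ is proper as well.

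The one point that deserves to be made explicit is that the composition $S\to R\to R/I$ is an isomorphism. Indeed, the projection $R=S\oplus N\to R/N$ sends $(s,n)\mapsto(s,0)+N$, so precomposing with $s\mapsto(s,0)$ gives $s\mapsto s+N$, which is exactly the canonical isomorphism $S\xrightarrow{\sim}R/N$ induced by the idealization structure. Thus all the hypotheses of Corollary \ref{sri} hold, and invoking it immediately produces a nontrivial extension-closed subcategory of $\mod R$.

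There is no genuine obstacle here; the result is a clean specialization of Corollary \ref{sri}. The only care required is to confirm that both the inclusion $S\subsetneq R$ and the ideal $N\subsetneq R$ are strict---resting respectively on $N\neq 0$ and on $S\neq 0$---and that the natural composite is an honest isomorphism rather than merely an injection or a surjection.
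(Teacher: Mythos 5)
Your proof is correct and follows the same route as the paper: embed $S$ in $R=S\ltimes N$ via $s\mapsto(s,0)$, take $I=\{(0,n)\mid n\in N\}$, observe that the composite $S\to R\to R/I$ is an isomorphism, and apply Corollary \ref{sri}. The extra checks you make (that $S\subsetneq R$ is proper because $N\ne 0$, and that $S$ is indeed a local subring) are details the paper leaves implicit, and they are verified correctly.
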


\begin{proof}
Setting $I=\{\,(0,n)\in R\mid n\in N\,\}$, we see that the composite map $S\to R\to R/I$ of natural homomorphisms is an isomorphism.
Corollary \ref{sri} yields the conclusion.
\end{proof}

\begin{cor}\label{ctensor}
Let $S,T$ be complete local rings which are not fields and have the same coefficient field $k$.
Let $R=S\operatorname{\widehat{\otimes}}_kT$ be the complete tensor product of $S$ and $T$ over $k$.
Then $\mod R$ has a nontrivial extension-closed subcategory.
\end{cor}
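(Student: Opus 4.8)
The plan is to deduce the statement from Corollary \ref{sri}, in the same spirit as the two preceding corollaries. Write $\m_S$ and $\m_T$ for the maximal ideals of $S$ and $T$, regard $S$ as a subring of $R=S\operatorname{\widehat{\otimes}}_kT$ via $s\mapsto s\otimes1$, and regard $T$ as a subring via $t\mapsto 1\otimes t$. The residue map $T\to T/\m_T=k$ induces a surjection
$$
\pi\colon R=S\operatorname{\widehat{\otimes}}_kT\longrightarrow S\operatorname{\widehat{\otimes}}_kk=S,
$$
and I would take $I=\ker\pi$, which is the ideal $(1\otimes\m_T)R$ generated by the image of $\m_T$; this yields an isomorphism $R/I\cong S$, under which the composition $S\to R\to R/I$ becomes $\pi(s\otimes1)=s$, i.e.\ the identity on $S$.

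It then remains to verify the hypotheses of Corollary \ref{sri}. The subring $S$ is not a field by assumption. Since $R/I\cong S\ne0$, the ideal $I$ is proper, so $I\subsetneq R$. For the properness of $S\subsetneq R$, I would use that $T$ is not a field, so $\m_T\ne0$: the residue map $S\to k$ induces a retraction $R\to k\operatorname{\widehat{\otimes}}_kT=T$ of $t\mapsto 1\otimes t$, whence $1\otimes t\ne0$ for $0\ne t\in\m_T$, while $\pi(1\otimes t)=0$ forces $1\otimes t\notin S\otimes1$ (as $\pi$ is injective on $S\otimes1$). Thus $S\subsetneq R$ is a local subring which is not a field, $I\subsetneq R$ is an ideal, and $S\to R\to R/I$ is an isomorphism, so Corollary \ref{sri} supplies the desired nontrivial extension-closed subcategory of $\mod R$.

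The step I expect to require the most care is the identification $R/I\cong S$, since this is the only point where the completion, rather than ordinary tensor algebra, intervenes: one must check that forming the quotient by the ideal generated by $1\otimes\m_T$ is compatible with the inverse limit defining $S\operatorname{\widehat{\otimes}}_kT$, so that it recovers $S\operatorname{\widehat{\otimes}}_k(T/\m_T)=S$. This is a standard property of the complete tensor product over a common coefficient field, and once it is granted the remainder of the argument is purely formal.
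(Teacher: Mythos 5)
Your proposal is correct and takes essentially the same route as the paper: both arguments apply Corollary \ref{sri} with the subring $S\subseteq R$ and the ideal $I$ generated by the image of $\m_T$, the only difference being that the paper verifies the key isomorphism $S\cong R/I$ by invoking Cohen's structure theorem to write $S\cong k[[x_1,\dots,x_n]]/(f_1,\dots,f_a)$ and $T\cong k[[y_1,\dots,y_m]]/(g_1,\dots,g_b)$, identifying $R$ with $k[[x_1,\dots,x_n,y_1,\dots,y_m]]/(f_1,\dots,f_a,g_1,\dots,g_b)$, where killing $(y_1,\dots,y_m)R$ visibly recovers $S$, whereas you argue intrinsically via functoriality of $\operatorname{\widehat{\otimes}}_k$. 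The step you flag as delicate (compatibility of the quotient with the completion) is exactly what the Cohen presentation settles at once, and your additional checks --- properness of $S\subsetneq R$ and of $I\subsetneq R$ --- are routine verifications the paper leaves implicit, so your write-up is a valid, slightly more detailed rendering of the same proof.
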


\begin{proof}
We can write $S\cong k[[x_1,\dots,x_n]]/(f_1,\dots,f_a)$ and $T\cong k[[y_1,\dots,y_m]]/(g_1,\dots,g_b)$, where $n,m\ge 1$, $f_1,\dots,f_a\in(x_1,\dots,x_n)^2$ and $g_1,\dots,g_b\in(y_1,\dots,y_m)^2$.
Then $R$ is isomorphic to the ring $k[[x_1,\dots,x_n,y_1,\dots,y_m]]/(f_1,\dots,f_a,g_1,\dots,g_b)$.
The composition $S\to R\to R/(y_1,\dots,y_m)R$ of natural maps is an isomorphism, and we can use Corollary \ref{sri}.
\end{proof}

The following result is due to Shiro Goto.

\begin{cor}
Let $R=k[[X_1,\dots,X_n,Y]]/\a$ be a residue ring of a formal power series ring over a field $k$ with $n\ge 1$.
Assume that $Y^{l+1}\in\a\subseteq(X_1,\dots,X_n,Y)^{l+1}$ holds for some $l\ge 1$.
Then $\mod R$ has a nontrivial extension-closed subcategory.
\end{cor}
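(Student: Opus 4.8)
The plan is to apply Corollary \ref{sri}. Writing $A=k[[X_1,\dots,X_n,Y]]$ and letting $x_1,\dots,x_n,y$ denote the images in $R$ of $X_1,\dots,X_n,Y$, I would take the subring $S$ to be the image of $k[[Y]]\subseteq A$ under the natural surjection $A\to R=A/\a$, and take the ideal $I=(x_1,\dots,x_n)R$. The key point is that, thanks to the two-sided hypothesis $Y^{l+1}\in\a\subseteq\m^{l+1}$ (where $\m=(X_1,\dots,X_n,Y)$), both $S$ and $R/I$ turn out to be the ring $k[[Y]]/(Y^{l+1})$, and the composite $S\to R\to R/I$ identifies them via the identity map.

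To make this precise I would first compute $S$. The kernel of $k[[Y]]\to R$ is $\a\cap k[[Y]]$. Since $Y^{l+1}\in\a$, this intersection contains $(Y^{l+1})$; conversely any power series in $Y$ alone lying in $\a\subseteq\m^{l+1}$ must have $Y$-order at least $l+1$, because every monomial occurring in an element of $\m^{l+1}$ has total degree at least $l+1$. Hence $\a\cap k[[Y]]=(Y^{l+1})$ and $S\cong k[[Y]]/(Y^{l+1})$. As $l\ge 1$, the local ring $S$ is not a field; and since $n\ge 1$, the element $x_1$ does not lie in $S$ (otherwise $X_1-g(Y)\in\a\subseteq\m^2$ for some $g\in k[[Y]]$, impossible by inspection of degree-one parts), so $S\subsetneq R$.

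Next I would compute $R/I$. We have $R/I=A/(\a+(X_1,\dots,X_n)A)$, and modding out $X_1,\dots,X_n$ first gives $A/(X_1,\dots,X_n)A=k[[Y]]$, so $R/I\cong k[[Y]]/\a''$, where $\a''$ is the image of $\a$ under the substitution $X_i\mapsto 0$. Again $Y^{l+1}\in\a$ gives $Y^{l+1}\in\a''$, while $\a\subseteq\m^{l+1}$ gives $\a''\subseteq(Y^{l+1})$; thus $\a''=(Y^{l+1})$ and $R/I\cong k[[Y]]/(Y^{l+1})$. Finally, after these two identifications the composite $S\to R\to R/I$ is induced by the inclusion $k[[Y]]\hookrightarrow A$ followed by $A\to k[[Y]]/(Y^{l+1})$, that is, it is the canonical surjection $k[[Y]]/(Y^{l+1})\to k[[Y]]/(Y^{l+1})$, which is the identity. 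Hence the composite is an isomorphism, and Corollary \ref{sri} yields the conclusion.

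The only genuine content is the pair of ideal computations $\a\cap k[[Y]]=(Y^{l+1})$ and $\a''=(Y^{l+1})$; both rest on the hypothesis $Y^{l+1}\in\a\subseteq\m^{l+1}$, which simultaneously forces the relevant $Y$-orders to be at least $l+1$ and supplies the generator $Y^{l+1}$. I expect this order bookkeeping --- verifying that a pure-$Y$ series belonging to $\m^{l+1}$ is divisible by $Y^{l+1}$ --- to be the main, though entirely routine, step, with everything else being formal.
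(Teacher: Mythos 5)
Your proposal is correct and takes essentially the same route as the paper: both apply Corollary \ref{sri} with $S=k[[y]]$ and an ideal whose quotient is $k[[Y]]/(Y^{l+1})$, and your ideal $I=(x_1,\dots,x_n)$ in fact coincides with the paper's $(x_1,\dots,x_n)+\m^{l+1}$ because $y^{l+1}=0$ forces $\m^{l+1}\subseteq(x_1,\dots,x_n)R$. The only cosmetic difference is that you verify $\a\cap k[[Y]]=(Y^{l+1})$ directly by the degree argument, whereas the paper deduces the injectivity of $k[[t]]/(t^{l+1})\to k[[y]]$ indirectly from the composite map onto $R/I$ being an isomorphism.
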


\begin{proof}
Let $x_1,\dots,x_n,y\in R$ be the residue classes of $X_1,\dots,X_n,Y$.
Let $k[[y]]$ be the $k$-subalgebra of $R$ generated by $y$.
Since $y^{l+1}=0$, we have a surjective ring homomorphism $\phi:k[[t]]/(t^{l+1})\to k[[y]]$ given by $\phi(\overline{f(t)})=f(y)$ for $f(t)\in k[[t]]$, where $t$ is an indeterminate over $k$.
Thus we obtain a ring homomorphism
$$
\psi:k[[t]]/(t^{l+1}) \overset{\phi}{\to} k[[y]]\subsetneq R\to R/(x_1,\dots,x_n)+\m^{l+1}=k[[Y]]/(Y^{l+1}).
$$
We see that $\psi$ is an isomorphism.
Hence $\phi$ is injective, and therefore it is an isomorphism.
Applying Corollary \ref{sri} to $S=k[[y]]$ and $I=(x_1,\dots,x_n)+\m^{l+1}$, we get the conclusion.
\end{proof}

Using Corollaries \ref{idealize} and \ref{ctensor}, let us construct examples of a ring $R$ such that $\mod R$ has a nontrivial extension-closed subcategory.

\begin{ex}
Let $k$ be a field.

(1) Consider the ring
$$
R=k[[x,y,z,w]]/(x^2,xy,xz-yw,xw,y^2,yz,z^2,zw,w^2).
$$
This is an Artinian Gorenstein local ring.
Putting $S=k[[x,y]]/(x^2,xy,y^2)$, we observe that $R$ is isomorphic to the idealization $S\ltimes E_S(k)$, where $E_S(k)$ denotes the injective hull of the $S$-module $k$.
Hence it follows from Corollary \ref{idealize} that $\mod R$ has a nontrivial extension-closed subcategory.

In fact, for instance, let $\X$ be the subcategory of $\mod R$ consisting of all $R$-modules $X$ satisfying $\Tor_1^R(R/(x),X)=0$.
It is clear that $\X$ is extension-closed.
We have an exact sequence
$$
0 \to R/(x,y,w) \overset{f}{\to} R \to R/(x) \to 0,
$$
where $f(\overline{1})=x$.
Making the tensor product over $R$ of this exact sequence with $R/(z)$, we get an exact sequence
$$
0 \to \Tor_1^R(R/(x),R/(z)) \to k \overset{g}{\to} R/(z) \to R/(x,z) \to 0,
$$
where $g(\overline{1})=\overline{x}$.
We see that $\Tor_1^R(R/(x),R/(z))=0$, namely, $R/(z)$ belongs to $\X$.
Since $R/(x)$ is not a free $R$-module, $k$ does not belong to $\X$.
Thus $\X$ is an extension-closed subcategory of $\mod R$ which is different from any of $\zero$, $\add R$, $\mod R$.

(2) Let
$$
R=k[[x,y]]/(x^n,y^m)
$$
with $n,m\ge 2$.
This is an Artinian complete intersection.
Since we have an isomorphism $R\cong k[[x]]/(x^n)\operatorname{\widehat{\otimes}}_kk[[y]]/(y^m)$ of rings, $\mod R$ has a nontrivial extension-closed subcategory by Corollary \ref{ctensor}.

Indeed, for example, the subcategory of $\mod R$ consisting of all $R$-modules $X$ with $\Tor_1^R(R/(x),X)=0$ is extension-closed, and does not coincide with any of $\zero$, $\add R$, $\mod R$ because it contains $R/(y)$ and does not contain $k$.
\end{ex}

Now, we verify that Conjecture \ref{mainc} holds for a ring admitting a module with bounded Betti numbers.

\begin{prop}\label{bergh}
Suppose that $\mod R$ has only trivial extension-closed subcategories.
If there exists a nonfree $R$-module $M$ whose Betti numbers are bounded, then $R$ is an Artinian hypersurface.
\end{prop}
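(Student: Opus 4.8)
The plan is to consider the subcategory $\X$ of $\mod R$ consisting of all modules with bounded Betti numbers, to prove that it is extension-closed, and then to use the given nonfree module $M$ together with the hypothesis to force $\X=\mod R$. Once $k$ lies in $\X$, the boundedness of the Betti numbers of the residue field will identify $R$ as a hypersurface. Throughout I write $\beta_i(N)=\dim_k\Tor_i^R(N,k)$, so that ``$N$ has bounded Betti numbers'' means $\sup_i\beta_i(N)<\infty$, equivalently $\cx N\le 1$.

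First I would invoke Proposition \ref{artgor}: since $\mod R$ has only trivial extension-closed subcategories, $R$ is Artinian and Gorenstein. This already secures the Artinian half of the conclusion, and it records that over $R$ a module of finite projective dimension is free. Next I would verify that $\X$ is extension-closed. For a direct summand $N$ of a module $L$, each $\Tor_i^R(N,k)$ is a direct summand of $\Tor_i^R(L,k)$, so $\beta_i(N)\le\beta_i(L)$ and $\X$ is closed under summands. For a short exact sequence $0\to A\to B\to C\to 0$, the long exact sequence in $\Tor_*^R(-,k)$ gives $\beta_i(B)\le\beta_i(A)+\beta_i(C)$ for all $i$, so if $A$ and $C$ have bounded Betti numbers then so does $B$; thus $\X$ is closed under extensions.

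Now $\X$ contains every free module, whose Betti numbers vanish for large $i$, and by hypothesis it contains the nonfree module $M$. Hence $\X$ is neither $\zero$ (as $M\ne 0$) nor $\add R$ (as $M$ is not free). Because $\mod R$ has only trivial extension-closed subcategories, the only remaining possibility is $\X=\mod R$. In particular $k\in\X$, that is, the Betti numbers $\beta_i(k)$ are bounded. Finally I would apply the classical characterization of hypersurfaces by the residue field: a local ring whose residue field has bounded Betti numbers is a hypersurface (Gulliksen, Tate). Combined with the Artinian property obtained above, this shows that $R$ is an Artinian hypersurface.

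The only substantial input is this last step; the implication ``$\beta_i(k)$ bounded $\Rightarrow$ $R$ is a hypersurface'' is the deep homological fact, and I would cite it rather than reprove it. Everything else is the routine observation that having complexity at most $1$ is a condition closed under summands and extensions, which is exactly what lets the extension-closed machinery of the paper take over.
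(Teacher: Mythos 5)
Your proposal is correct and follows essentially the same route as the paper's own proof: invoke Proposition \ref{artgor} for the Artinian part, show the subcategory $\X$ of modules with bounded Betti numbers is extension-closed, use the nonfree module $M$ to force $\X=\mod R$, and conclude from $k\in\X$ via the Gulliksen--Tate characterization (the paper cites \cite{T} and \cite[Remarks 8.1.1(3)]{A} for this last step). You merely spell out the summand and long-exact-sequence verifications that the paper calls ``easy to see.''
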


\begin{proof}
That the local ring $R$ is Artinian follows from Proposition \ref{artgor}.
Let $\X$ be the subcategory of $\mod R$ consisting of all $R$-modules whose Betti numbers are bounded.
Then it is easy to see that $\X$ is extension-closed.
Since the nonfree $R$-module $M$ belongs to $\X$, our assumption implies that $\X$ coincides with $\mod R$.
In particular, the module $k$ is in $\X$, which forces $R$ to be a hypersurface (cf. \cite{T} or \cite[Remarks 8.1.1(3)]{A}).
\end{proof}

Using \cite[Theorem 3.2]{B}, we observe that such a module $M$ as in Proposition \ref{bergh} exists when there exists an $R$-complex of finite complete intersection dimension and of infinite projective dimension.
(See \cite{AGP} for the details of complete intersection dimension.)
Thus we obtain:

\begin{cor}\label{berghc}
Assume that there exists an $R$-complex of finite complete intersection dimension and of infinite projective dimension.
If $\mod R$ has only trivial extension-closed subcategories, then $R$ is an Artinian hypersurface.
\end{cor}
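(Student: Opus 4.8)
The plan is to deduce this from Proposition \ref{bergh}, whose hypotheses ask only for a nonfree $R$-module with bounded Betti numbers. Thus the whole task reduces to manufacturing such a module out of the given complex of finite complete intersection dimension and infinite projective dimension. Once that module is produced, the standing assumption that $\mod R$ has only trivial extension-closed subcategories lets Proposition \ref{bergh} fire and conclude that $R$ is an Artinian hypersurface; no further work is needed on the extension-closure side.

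To build the module, I would first pass to the invariant of complexity. For a complex $X$ of finite complete intersection dimension the complexity $\cx X$ is finite, and $\cx X=0$ holds exactly when $X$ has finite projective dimension. Since our complex has infinite projective dimension, its complexity is therefore a positive integer, and this positivity is the feature I will trade for a module of controlled growth. Next I would invoke \cite[Theorem 3.2]{B} to extract from such a complex a finitely generated $R$-module $M$ whose complexity equals exactly $1$. A module of complexity $1$ has Betti numbers that are bounded above, and because its complexity is nonzero these Betti numbers do not eventually vanish, so $M$ is not free. This is precisely a module of the type required by Proposition \ref{bergh}, and for that application only the boundedness of the Betti numbers and the nonfreeness matter.

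The main obstacle is concentrated entirely in the extraction step, namely the passage from a complex of positive complexity to a module of complexity exactly $1$. I expect this to run through the theory of support (or cohomological) varieties attached to objects of finite complete intersection dimension: over a suitable quasi-deformation one associates to the complex an algebraic set whose dimension equals its complexity, and cutting this variety down to a curve, or equivalently selecting a module whose variety is one-dimensional, produces a module of complexity $1$. The delicate points are that such a reduction remains within the class of modules of finite complete intersection dimension and that one can descend all the way to dimension $1$ without collapsing to $0$; both are exactly what \cite[Theorem 3.2]{B} is designed to guarantee, so that the remark preceding the statement carries the real weight of the argument.
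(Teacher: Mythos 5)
Your proposal is correct and follows exactly the paper's route: the paper likewise obtains the corollary by invoking \cite[Theorem 3.2]{B} to extract from the given complex a nonfree module with bounded Betti numbers (a module of complexity one), and then applies Proposition \ref{bergh}. Your extra discussion of how Bergh's theorem works internally is accurate but not needed beyond citing it, just as in the paper.
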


Since over a complete intersection local ring every module has finite complete intersection dimension, Corollary \ref{berghc} and Proposition \ref{artgor} guarantee that Conjecture \ref{mainc} holds true in the case where the local ring $R$ is a complete intersection.
Combining this with Proposition \ref{mainp}, we get the following result.

\begin{cor}\label{bcor}
If $R$ is a complete intersection, then the following are equivalent.
\begin{enumerate}[\rm (1)]
\item
$R$ is an Artinian hypersurface.
\item
$\mod R$ has only trivial extension-closed subcategories.
\end{enumerate}
\end{cor}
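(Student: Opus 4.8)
The plan is to establish the two implications separately, assembling results already proved above. For $(1)\Rightarrow(2)$ there is nothing new to do: this is exactly the content of Proposition~\ref{mainp}, which asserts that an Artinian hypersurface has only trivial extension-closed subcategories. So I would dispose of this direction in a single line.

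For the converse $(2)\Rightarrow(1)$, I would first apply Proposition~\ref{artgor} to deduce from hypothesis~(2) that $R$ is Artinian (and Gorenstein). The remaining task is to promote this to \emph{hypersurface}, and the key leverage is the standing assumption that $R$ is a complete intersection: by the theory of complete intersection dimension (see \cite{AGP}), every $R$-module, indeed every $R$-complex, has finite complete intersection dimension. I would then try to feed a complex of infinite projective dimension into Corollary~\ref{berghc}, whose conclusion is precisely that $R$ is an Artinian hypersurface.

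The natural candidate for such a complex is the residue field $k$, and here I would split on whether $R$ is regular. If $R$ is regular, then since $R$ is Artinian its maximal ideal vanishes, so $R$ is a field, which is trivially an Artinian hypersurface; this case must be handled by hand, because over a field every complex has finite projective dimension and Corollary~\ref{berghc} is inapplicable. If $R$ is not regular, then $\operatorname{pd}_R k=\infty$ by the standard characterization of regularity, while $k$ has finite complete intersection dimension by the previous paragraph; hence $k$ is a complex of finite complete intersection dimension and infinite projective dimension, and Corollary~\ref{berghc} applies to give that $R$ is an Artinian hypersurface.

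The main obstacle I anticipate is not any deep computation but the case distinction itself: Corollary~\ref{berghc} demands a complex of \emph{infinite} projective dimension, and the degenerate regular case supplies none, so one must notice that Artinian regularity forces $R$ to be a field and treat this case separately. With that wrinkle addressed, the argument is a direct synthesis of Propositions~\ref{mainp} and~\ref{artgor} with Corollary~\ref{berghc}.
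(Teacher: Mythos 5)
Your proof is correct and follows essentially the same route as the paper: Proposition~\ref{mainp} gives $(1)\Rightarrow(2)$, and $(2)\Rightarrow(1)$ combines Proposition~\ref{artgor} with Corollary~\ref{berghc}, using that over a complete intersection every module (in particular $k$) has finite complete intersection dimension. Your explicit handling of the degenerate regular case, where $R$ is Artinian regular and hence a field (trivially a hypersurface), is a point the paper leaves implicit, but it does not change the substance of the argument.
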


\section{Main results}

In this section, we conduct a closer investigation of the condition that $\mod R$ has a nontrivial extension-closed subcategory.
Establishing a certain assumption on the ring $R$, we shall construct an explicit nontrivial extension-closed subcategory.
For this purpose, we begin with introducing a notion of a subcategory constructed from a single module.

\begin{dfn}
Let $X$ be a nonzero $R$-module.
We define the subcategory $\filt_R^nX$ of $\mod R$ inductively as follows.
\begin{enumerate}[(1)]
\item
Let $\filt_R^1X$ be the subcategory consisting of $X$.
\item
For $n\ge 2$, let $\filt_R^nX$ be the subcategory consisting of all $R$-modules $M$ such that there are exact sequences
$$
0 \to Y \to M \to X \to 0
$$
of $R$-modules with $Y\in\filt_R^{n-1}X$.
\end{enumerate}
We denote by $\filt_RX$ the subcategory of $\mod R$ consisting of all $R$-modules $M$ such that $M\in\filt_R^nX$ for some $n\ge 1$.
\end{dfn}

Here is a result concerning the structure of $\filt_R^nX$.
Its name comes from its property stated in the first assertion.

\begin{prop}\label{fil}
Let $X$ be a nonzero $R$-module.
\begin{enumerate}[\rm (1)]
\item
An $R$-module $M$ belongs to $\filt_R^nX$ if and only if there exists a filtration
$$
0=M_0\subsetneq M_1\subsetneq M_2\subsetneq\cdots\subsetneq M_n=M
$$
of $R$-submodules of $M$ with $M_i/M_{i-1}\cong X$ for all $1\le i\le n$.
\item
If $\filt_R^pX$ intersects $\filt_R^qX$, then $p=q$.
\end{enumerate}
\end{prop}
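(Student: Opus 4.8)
The plan is to prove the two parts separately: part (1) by a straightforward induction on $n$, and part (2) by reducing to the additivity of length after a localization that makes $X$ of finite length.

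For part (1), I would induct on $n$. The base case $n=1$ is immediate, since $M\in\filt_R^1X$ means $M\cong X$, which is precisely the filtration $0=M_0\subsetneq M_1=M$ (the inclusion being strict because $X\ne 0$). For the inductive step, the forward direction uses the defining exact sequence $0\to Y\to M\to X\to 0$ with $Y\in\filt_R^{n-1}X$: identifying $Y$ with its image in $M$ via the injection, the filtration of $Y$ provided by the inductive hypothesis extends to one of $M$ by appending $M$ itself at the top, whose new quotient $M/Y\cong X$. Conversely, given a filtration of $M$ of length $n$, the penultimate term $M_{n-1}$ carries a filtration of length $n-1$ and so lies in $\filt_R^{n-1}X$ by induction, and the exact sequence $0\to M_{n-1}\to M\to M/M_{n-1}\to 0$ with $M/M_{n-1}\cong X$ exhibits $M\in\filt_R^nX$.

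For part (2), the key point is to attach to every $M$ with such a filtration a strictly positive additive numerical invariant recording the filtration length. I would fix a minimal prime $\mathfrak p$ of $\operatorname{Supp}_RX$; since $X\ne 0$ this set is nonempty, and $X_{\mathfrak p}\ne 0$ is a module of finite positive length over $R_{\mathfrak p}$, as its support in $\operatorname{Spec}R_{\mathfrak p}$ consists of the closed point alone. Now if $M\in\filt_R^nX$, part (1) supplies a filtration with quotients $\cong X$; localizing at $\mathfrak p$, which is exact, produces a filtration of $M_{\mathfrak p}$ all of whose quotients are $\cong X_{\mathfrak p}$, whence additivity of length gives $\ell_{R_{\mathfrak p}}(M_{\mathfrak p})=n\,\ell_{R_{\mathfrak p}}(X_{\mathfrak p})$. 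Consequently, if $M$ belongs to both $\filt_R^pX$ and $\filt_R^qX$, then $p\,\ell_{R_{\mathfrak p}}(X_{\mathfrak p})=q\,\ell_{R_{\mathfrak p}}(X_{\mathfrak p})$, and since $0<\ell_{R_{\mathfrak p}}(X_{\mathfrak p})<\infty$ we may cancel to conclude $p=q$.

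The main obstacle is part (2) in the case where $X$ is not of finite length over $R$ (for instance when $R$ is not Artinian), since then one cannot simply read off $n$ from $\ell_R(M)$. Localizing at a minimal prime of $\operatorname{Supp}_RX$ is the device that resolves this: it converts $X$ into a nonzero finite-length module while, by exactness of localization, leaving the filtration of part (1) intact, thereby yielding precisely the positive additive invariant needed to detect the filtration length and force $p=q$.
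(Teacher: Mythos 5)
Your proof is correct, and part (1) is exactly the induction the paper leaves to the reader ("this can be proved by induction on $n$"), carried out carefully in both directions. For part (2), however, you take a genuinely different route. The paper attaches to each $M\in\filt_R^nX$ the Hilbert--Samuel multiplicity, observing that $\e(M)=n\cdot\e(X)$ and cancelling $\e(X)>0$; the additivity of $\e(-)$ along the filtration is legitimate here because every subquotient is isomorphic to $X$ itself, so all have the same dimension (additivity of multiplicity fails in general when subquotients have smaller dimension, a point the paper glosses over). You instead localize at a minimal prime $\mathfrak p$ of $\operatorname{Supp}_RX$, turning $X$ into a nonzero module of finite length over $R_{\mathfrak p}$, and then use exactness of localization plus ordinary additivity of length to read off $\ell_{R_\mathfrak p}(M_\mathfrak p)=n\,\ell_{R_\mathfrak p}(X_\mathfrak p)$. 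This buys you a more elementary and self-contained argument, requiring no Hilbert--Samuel theory and no dimension bookkeeping; the paper's version is shorter once the multiplicity machinery is taken for granted. The two are closely related: by the associativity formula, $\e(M)=\sum_{\mathfrak p}\ell_{R_\mathfrak p}(M_\mathfrak p)\,\e(R/\mathfrak p)$, the sum running over primes with $\dim R/\mathfrak p=\dim M$, so your localized length is essentially the positive additive invariant hiding inside the paper's multiplicity count.
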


\begin{proof}
(1) This can be proved by induction on $n$.

(2) It is seen from the definition that if an $R$-module $M$ belongs to $\filt_R^nX$, then we have $\e(M)=n\cdot\e(X)$, where $\e(-)$ denotes the multiplicity.
The assertion immediately follows from this.
\end{proof}

\begin{cor}
Let $X$ be a nonzero $R$-module.
\begin{enumerate}[\rm (1)]
\item
Let $0 \to L \to M \to N \to 0$ be an exact sequence of $R$-modules.
If $L$ is in $\filt_R^pX$ and $N$ is in $\filt_R^qX$, then $M$ is in $\filt_R^{p+q}X$.
\item
The subcategory $\filt_RX$ of $\mod R$ is closed under extensions.
\end{enumerate}
\end{cor}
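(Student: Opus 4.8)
The plan is to deduce both parts from the filtration characterization in Proposition \ref{fil}(1). For part (1), I would first record filtrations witnessing the hypotheses: since $L\in\filt_R^pX$ there is a chain
$$
0=L_0\subsetneq L_1\subsetneq\cdots\subsetneq L_p=L
$$
with each $L_i/L_{i-1}\cong X$, and since $N\in\filt_R^qX$ there is a chain $0=N_0\subsetneq N_1\subsetneq\cdots\subsetneq N_q=N$ with each $N_j/N_{j-1}\cong X$. Regarding $L$ as a submodule of $M$ via the given injection and writing $\pi\colon M\to N$ for the surjection, I would then pull back the filtration of $N$ by setting $M_{p+j}=\pi^{-1}(N_j)$ for $0\le j\le q$.

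The key point is that these preimages splice onto the filtration of $L$ to produce a filtration of $M$ of the desired length. Indeed $M_p=\pi^{-1}(N_0)=\pi^{-1}(0)=L=L_p$ and $M_{p+q}=\pi^{-1}(N_q)=\pi^{-1}(N)=M$, so concatenating gives
$$
0=L_0\subsetneq\cdots\subsetneq L_p=M_p\subsetneq M_{p+1}\subsetneq\cdots\subsetneq M_{p+q}=M.
$$
The two things I would verify are that all successive quotients are isomorphic to $X$ and that the inclusions are strict. For the upper portion this is the standard isomorphism $M_{p+j}/M_{p+j-1}=\pi^{-1}(N_j)/\pi^{-1}(N_{j-1})\cong N_j/N_{j-1}\cong X$ furnished by the correspondence theorem, while strictness of $M_{p+j-1}\subsetneq M_{p+j}$ follows because the quotient $X$ is nonzero. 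Applying Proposition \ref{fil}(1) to this length-$(p+q)$ filtration then yields $M\in\filt_R^{p+q}X$.

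Part (2) I would obtain as an immediate formal consequence: given an exact sequence $0\to L\to M\to N\to 0$ with $L,N\in\filt_RX$, by definition $L\in\filt_R^pX$ and $N\in\filt_R^qX$ for some $p,q\ge 1$, so part (1) places $M$ in $\filt_R^{p+q}X\subseteq\filt_RX$.

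I do not anticipate a genuine obstacle here; the proof is essentially bookkeeping with the two isomorphism theorems. The only points requiring a little care are confirming that the spliced chain really has every quotient isomorphic to $X$ and that strictness of the inclusions survives the splicing (which is exactly where the hypothesis $X\ne 0$ enters), together with the implicit observation that $\filt_RX$ is a strict subcategory, so the constructed filtration may be transported along any isomorphism.
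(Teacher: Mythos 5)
Your proof is correct and follows the same route the paper intends: the paper's own proof consists of the single remark that part (1) follows from Proposition \ref{fil}(1), and your splicing of the filtration of $L$ with the pullback $\pi^{-1}(N_j)$ of the filtration of $N$ is exactly the standard argument being invoked there, with part (2) the same immediate consequence. No gaps; you have simply written out the details the paper leaves implicit.
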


\begin{proof}
(1) Using Proposition \ref{fil}(1), we can prove the assertion.

(2) This assertion follows from (1).
\end{proof}

For an $R$-module $X$, we denote by $\ext_RX$ the {\em extension closure} of $X$, that  is, the smallest extension-closed subcategory of $\mod R$ containing $X$.
One can describe $\ext_RX$ by using $\filt_RX$.

\begin{prop}\label{extfilt}
Let $X$ be a nonzero $R$-module.
Then $\ext_RX$ coincides with the subcategory of $\mod R$ consisting of all direct summands of modules in $\filt_RX$.
\end{prop}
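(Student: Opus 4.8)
The plan is to prove the two inclusions separately. Write $\mathcal{D}$ for the subcategory of $\mod R$ consisting of all direct summands of modules belonging to $\filt_R X$; the goal is to show that $\mathcal{D}=\ext_R X$. The easier inclusion is $\mathcal{D}\subseteq\ext_R X$, and for this it is enough to verify $\filt_R X\subseteq\ext_R X$, since $\ext_R X$ is closed under direct summands. Given $M\in\filt_R^n X$, Proposition \ref{fil}(1) supplies a filtration $0=M_0\subsetneq\cdots\subsetneq M_n=M$ with each $M_i/M_{i-1}\cong X$, and I would argue by induction on $n$: the case $n=1$ is $M\cong X\in\ext_R X$, while for $n\ge 2$ the short exact sequence $0\to M_{n-1}\to M\to X\to 0$, together with $M_{n-1}\in\ext_R X$ (induction hypothesis) and $X\in\ext_R X$, forces $M\in\ext_R X$ because $\ext_R X$ is extension-closed.

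For the reverse inclusion $\ext_R X\subseteq\mathcal{D}$, I would use that $\ext_R X$ is the \emph{smallest} extension-closed subcategory containing $X$: it then suffices to check that $\mathcal{D}$ is itself extension-closed and contains $X$. That $X\in\mathcal{D}$ (since $X\in\filt_R^1 X$) and that $\mathcal{D}$ is closed under direct summands are both immediate from the definition of $\mathcal{D}$. The heart of the matter is closure under extensions. Suppose $0\to L\to N\to N''\to 0$ is exact with $L,N''\in\mathcal{D}$; then I can choose $A,B\in\filt_R X$ and modules $L',N'''$ with $A\cong L\oplus L'$ and $B\cong N''\oplus N'''$. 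The idea is to enlarge the given sequence to one whose outer terms lie in $\filt_R X$: taking the direct sum of the given sequence with the split sequences $0\to L'\xrightarrow{\ \mathrm{id}\ }L'\to 0\to 0$ and $0\to 0\to N'''\xrightarrow{\ \mathrm{id}\ }N'''\to 0$ yields an exact sequence $0\to A\to N\oplus L'\oplus N'''\to B\to 0$.

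Since $\filt_R X$ is closed under extensions (the corollary preceding this proposition), the middle term $N\oplus L'\oplus N'''$ lies in $\filt_R X$, and as $N$ is a direct summand of it we conclude $N\in\mathcal{D}$. This shows $\mathcal{D}$ is extension-closed, hence $\ext_R X\subseteq\mathcal{D}$, and combined with the first inclusion gives the desired equality.

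The step I expect to be the main obstacle is precisely this closure of $\mathcal{D}$ under extensions: an extension of two direct summands of $\filt_R X$-modules is not \emph{visibly} a direct summand of a $\filt_R X$-module, since a filtration of $N$ with quotients $X$ is not produced directly. The device that overcomes this is the completion-by-split-sequences above, which replaces $L$ and $N''$ by genuine members $A,B$ of $\filt_R X$ at the cost of enlarging $N$ to $N\oplus L'\oplus N'''$; the extension-closure of $\filt_R X$ then does the rest, and discarding the extra summands recovers $N\in\mathcal{D}$.
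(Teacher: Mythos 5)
Your proof is correct and takes essentially the same approach as the paper: the same device of padding the given exact sequence with split sequences so that its outer terms become genuine members of $\filt_RX$, combined with the extension-closure of $\filt_RX$, establishes that the category of direct summands is extension-closed, while minimality of $\ext_RX$ and induction on $n$ give the reverse inclusion exactly as in the paper. The only cosmetic difference is that you route the induction through the filtration of Proposition \ref{fil}(1), whereas the paper inducts directly on the defining exact sequences of $\filt_R^nX$; this changes nothing of substance.
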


\begin{proof}
Let $\X$ be the subcategory of $\mod R$ consisting of all direct summands of modules in $\filt_RX$.
It suffices to prove the following two statements.
\begin{enumerate}[(1)]
\item
$\X$ is an extension-closed subcategory of $\mod R$ containing $X$.
\item
If $\X'$ is an extension-closed subcategory of $\mod R$ containing $X$, then $\X'$ contains $\X$.
\end{enumerate}

As to (1):
Obviously, $\X$ contains $X$ and is closed under direct summands.
Let $0 \to L \to M \to N \to 0$ be an exact sequence of $R$-modules with $L,N\in\X$.
Then we have isomorphisms $L\oplus L'\cong Y$ and $N\oplus N'\cong Z$ for some $L',N'\in\mod R$ and $Y,Z\in\filt X$.
Taking the direct sum of the above exact sequence with the exact sequences $0 \to L' \overset{=}{\to} L' \to 0 \to 0$ and $0 \to 0 \to N' \overset{=}{\to} N' \to 0$, we get an exact sequence
$$
0 \to Y \to L'\oplus M\oplus N' \to Z \to 0.
$$
Since $Y,Z$ are in $\filt X$, so is $L'\oplus M\oplus N'$, and hence $M$ belongs to $\X$.
Thus $\X$ is closed under extensions.

As to (2):
Since $\X'$ is closed under direct summands, we have only to prove that $\X'$ contains $\filt X$, equivalently, that $\X'$ contains $\filt^nX$ for every $n\ge 1$.
This can easily be shown by induction on $n$.
\end{proof}

Let $x$ be an element of $R$.
To understand the subcategory $\ext_R(R/(x))$, we investigate the structure of each module in $\filt_R^n(R/(x))$ for $n\ge 1$.

\begin{prop}\label{matrix}
Let $x\in R$ and $n\ge 1$.
Let $M$ be an $R$-module in $\filt_R^n(R/(x))$.
Then there exists an exact sequence
$$
\begin{CD}
R^n @>{
\begin{pmatrix}
x & c_{1,2} & \cdots & c_{1,n} \\
0 & \ddots & \ddots & \vdots \\
\vdots & \ddots & \ddots & c_{n-1,n} \\
0 & \cdots & 0 & x
\end{pmatrix}
}>> R^n @>>> M @>>> 0
\end{CD}
$$
of $R$-modules with each $c_{i,j}$ being in $R$ such that
$$
\begin{pmatrix}
c_{1,j} \\
\vdots \\
c_{j-1,j}
\end{pmatrix}
(0:_Rx)\subseteq\Image
\begin{pmatrix}
x & c_{1,2} & \cdots & c_{1,j-1} \\
0 & \ddots & \ddots & \vdots \\
\vdots & \ddots & \ddots & c_{j-2,j-1} \\
0 & \cdots & 0 & x
\end{pmatrix}
$$
for all $2\le j\le n$.
\end{prop}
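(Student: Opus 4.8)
The plan is to induct on $n$, constructing the presentation matrix one column at a time so that its upper-triangular shape records the filtration of $M$ given by Proposition \ref{fil}(1). The base case $n=1$ is immediate: then $M\cong R/(x)$, the canonical presentation $R\xrightarrow{x}R\to R/(x)\to 0$ supplies the $1\times 1$ matrix $(x)$, and the displayed image condition is vacuous since the index range $2\le j\le 1$ is empty.

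For the inductive step I would begin from the defining short exact sequence. As $M\in\filt_R^n(R/(x))$, there is an exact sequence $0\to Y\to M\xrightarrow{p}R/(x)\to 0$ with $Y\in\filt_R^{n-1}(R/(x))$. The inductive hypothesis applied to $Y$ yields a presentation $R^{n-1}\xrightarrow{B}R^{n-1}\xrightarrow{\pi_Y}Y\to 0$ in which $B$ is upper-triangular with $x$ along the diagonal and satisfies all the required image conditions for $2\le j\le n-1$. I would then pick a lift $m\in M$ of $\overline{1}\in R/(x)$ and assemble a homomorphism $\phi\colon R^n=R^{n-1}\oplus R\to M$ agreeing with $\pi_Y$ on $R^{n-1}$ and sending the last basis vector to $m$. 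A short diagram chase shows $\phi$ is surjective, and it fits into a commutative diagram whose top row is the split exact sequence $0\to R^{n-1}\to R^n\to R\to 0$, whose bottom row is $0\to Y\to M\to R/(x)\to 0$, and whose three vertical maps $\pi_Y$, $\phi$, and the canonical surjection $R\to R/(x)$ are all onto.

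Applying the snake lemma to this diagram produces the exact sequence $0\to\Image B\to\ker\phi\to(x)\to 0$, using $\ker\pi_Y=\Image B$ and that $(x)$ is the kernel of $R\to R/(x)$. The columns of $B$, regarded in $R^{n-1}\oplus 0$, generate $\Image B$, so it only remains to exhibit one element of $\ker\phi$ mapping onto the generator $x$ of $(x)$. Since $p(xm)=\overline{x}=0$, the element $xm$ lies in $Y$, and surjectivity of $\pi_Y$ lets me choose $c\in R^{n-1}$ with $\pi_Y(c)=-xm$; then the column $\binom{c}{x}$ lies in $\ker\phi$ and maps onto $x$. It follows that $\ker\phi=\Image A$ for $A=\begin{pmatrix}B&c\\0&x\end{pmatrix}$, which is upper-triangular with $x$ on the diagonal, giving the sought presentation with last column $c=(c_{1,n},\dots,c_{n-1,n})^{\mathsf T}$.

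It remains to verify the image condition, and this is where the real content of the argument lies. For $2\le j\le n-1$ it is inherited verbatim from the presentation of $Y$, so only the case $j=n$ is new, namely $c\,(0:_Rx)\subseteq\Image B$. Here I would compute directly: for $a\in(0:_Rx)$ one has $\pi_Y(ca)=a\,\pi_Y(c)=-a\cdot xm=-(ax)m=0$, whence $ca\in\ker\pi_Y=\Image B$. I expect the bookkeeping of the snake-lemma diagram and the correct choice of the lift $c$ to be the only delicate steps; once $\phi$ and $c$ are in place, both the triangular shape of $A$ and the image condition drop out immediately.
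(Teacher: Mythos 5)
Your proof is correct and follows essentially the same route as the paper: induction on $n$, a horseshoe-style construction of the presentation $\left(\begin{smallmatrix}B&c\\0&x\end{smallmatrix}\right)$ from the defining extension $0\to Y\to M\to R/(x)\to 0$, and verification of the new image condition for $j=n$. The only (cosmetic) difference is that the paper deduces the inclusion $c\,(0:_Rx)\subseteq\Image B$ from the vanishing of the snake-lemma connecting map $(0:x)\to Y$ together with a diagram chase, whereas you get it by the direct computation $\pi_Y(ca)=-(ax)m=0$ using the explicit lift $\pi_Y(c)=-xm$.
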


\begin{proof}
We prove the proposition by induction on $n$.
When $n=1$, we have $M\cong R/(x)$, and there is an exact sequence
$R \overset{x}{\to} R \to M \to 0$.
Let $n\ge 2$.
We have an exact sequence $0 \to Y \to M \to R/(x) \to 0$ of $R$-modules with $Y\in\filt^{n-1}(R/(x))$.
The induction hypothesis shows that there is an exact sequence $R^{n-1} \overset{A}{\to} R^{n-1} \to Y \to 0$ with $A=\left(
\begin{smallmatrix}
x & c_{1,2} & \cdots & c_{1,n-1} \\
0 & \ddots & \ddots & \vdots \\
\vdots & \ddots & \ddots & c_{n-2,n-1} \\
0 & \cdots & 0 & x
\end{smallmatrix}
\right)$ such that $\left(
\begin{smallmatrix}
c_{1,j} \\
\vdots \\
c_{j-1,j}
\end{smallmatrix}
\right)(0:x)\subseteq\Image\left(
\begin{smallmatrix}
x & c_{1,2} & \cdots & c_{1,j-1} \\
0 & \ddots & \ddots & \vdots \\
\vdots & \ddots & \ddots & c_{j-2,j-1} \\
0 & \cdots & 0 & x
\end{smallmatrix}
\right)$ for all $2\le j\le n-1$.
We have a commutative diagram
$$
\begin{CD}
@. @. @. 0 \\
@. @. @. @VVV \\
@. @. @. (0:x) \\
@. @. @. @VVV \\
0 @>>> R^{n-1} @>{\left(
\begin{smallmatrix}
1 \\
0
\end{smallmatrix}
\right)}>> R^{n-1}\oplus R @>{\left(
\begin{smallmatrix}
0 & 1
\end{smallmatrix}
\right)}>> R @>>> 0 \\
@. @V{A}VV @V{\left(
\begin{smallmatrix}
A & B \\
0 & x
\end{smallmatrix}
\right)}VV @V{x}VV \\
0 @>>> R^{n-1} @>{\left(
\begin{smallmatrix}
1 \\
0
\end{smallmatrix}
\right)}>> R^{n-1}\oplus R @>{\left(
\begin{smallmatrix}
0 & 1
\end{smallmatrix}
\right)}>> R @>>> 0 \\
@. @V{f}VV @VVV @VVV \\
0 @>>> Y @>>> M @>>> R/(x) @>>> 0 \\
@. @VVV @VVV @VVV \\
@. 0 @. 0 @. 0
\end{CD}
$$
with exact rows and columns.
The induced map $g:(0:x)\to Y$ is the zero map by the snake lemma.
By diagram chasing, we see that $g(r)=f(Br)$ holds for each $r\in(0:x)$.
Hence we have $f(Br)=0$ for all $r\in(0:x)$, whence $Br$ is in the image of the map $A:R^{n-1}\to R^{n-1}$.
Writing $B=\left(
\begin{smallmatrix}
c_{1,n} \\
\vdots \\
c_{n-1,n}
\end{smallmatrix}
\right)$, we obtain an inclusion relation $\left(
\begin{smallmatrix}
c_{1,n} \\
\vdots \\
c_{n-1,n}
\end{smallmatrix}
\right)(0:x)\subseteq\Image\left(
\begin{smallmatrix}
x & c_{1,2} & \cdots & c_{1,n-1} \\
0 & \ddots & \ddots & \vdots \\
\vdots & \ddots & \ddots & c_{n-2,n-1} \\
0 & \cdots & 0 & x
\end{smallmatrix}
\right)$.
Consequently, we have
$$
\left(
\begin{smallmatrix}
c_{1,j} \\
\vdots \\
c_{j-1,j}
\end{smallmatrix}
\right)(0:x)\subseteq\Image\left(
\begin{smallmatrix}
x & c_{1,2} & \cdots & c_{1,j-1} \\
0 & \ddots & \ddots & \vdots \\
\vdots & \ddots & \ddots & c_{j-2,j-1} \\
0 & \cdots & 0 & x
\end{smallmatrix}
\right)
$$
for all $2\le j\le n$.
The middle column of the above diagram gives an exact sequence
$$
\begin{CD}
R^n @>{\left(
\begin{smallmatrix}
x & c_{1,2} & \cdots & c_{1,n-1} & c_{1,n} \\
0 & \ddots & \ddots & \vdots & \vdots \\
\vdots & \ddots & \ddots & c_{n-2,n-1} & c_{n-2,n} \\
0 & \cdots & 0 & x & c_{n-1,n} \\
0 & \cdots & 0 & 0 & x
\end{smallmatrix}
\right)}>> R^n @>>> M @>>> 0.
\end{CD}
$$
Thus the proof of the proposition is completed.
\end{proof}

Now we can prove the following result concerning the structure of $\ext_R(R/(x))$, which is the main result of this paper.

\begin{thm}\label{main}
Let $x,y$ be part of a minimal system of generators of $\m$ with $xy=0$.
Then $k$ does not belong to $\ext_R(R/(x))$.
\end{thm}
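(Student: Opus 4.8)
Let $x,y$ be part of a minimal system of generators of $\m$ with $xy=0$. Then $k$ does not belong to $\ext_R(R/(x))$.

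The plan is to argue by contradiction using the presentation matrices supplied by Proposition \ref{matrix} together with the description of $\ext_R(R/(x))$ coming from Proposition \ref{extfilt}. Suppose $k\in\ext_R(R/(x))$. By Proposition \ref{extfilt}, $k$ is a direct summand of some module $M\in\filt_R(R/(x))$, say $M\in\filt_R^n(R/(x))$. Since $k$ is simple and a direct summand of $M$, and $M$ has a filtration with all quotients isomorphic to $R/(x)$, one first observes that $k$ appearing as a summand forces a strong constraint: a minimal generator of $M$ can be ``split off'' as a copy of $k$, which means that some generator is annihilated by all of $\m$. Concretely, by Proposition \ref{matrix} there is a presentation $R^n\xrightarrow{A}R^n\to M\to 0$ where $A$ is upper triangular with diagonal entries all equal to $x$ and with the stated column-inclusion conditions. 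The module $M$ is minimally presented (the entries of $A$ lie in $\m$ since $x\in\m$ and the $c_{i,j}$ can be taken in $\m$), so $M$ has exactly $n$ minimal generators and its first syzygy is the column space of $A$.

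The key step is to extract a contradiction from $k$ being a summand. If $k$ is a direct summand of $M$, then (after a change of basis of $R^n$) one of the minimal generators $e_s$ of $M$ is annihilated by $\m$; in particular $y\cdot e_s=0$ in $M$, so the vector $y\,\epsilon_s$ (where $\epsilon_s$ is the $s$-th standard basis vector of the target $R^n$) lies in $\Image A$, i.e. $y\,\epsilon_s=A v$ for some $v\in R^n$. I would exploit the special relation $xy=0$ here: multiplying $Av=y\,\epsilon_s$ by the triangular structure and reading off coordinates from the bottom up, the diagonal $x$'s interact with $y$ via $xy=0$. The aim is to show that the only way a column relation can produce $y\,\epsilon_s$ is impossible because $y\notin(x)$ and more precisely $y\notin \m\cdot(\text{entries of }A)$ at the relevant coordinate—this is where the hypothesis that $x,y$ are \emph{part of a minimal system of generators} (so $x,y$ are $k$-linearly independent modulo $\m^2$, and in particular $y\notin(x)$) becomes essential.

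The hardest part, and the crux of the whole argument, is precisely this linear-algebra step over the non-domain $R$: showing that the equation $A v = y\,\epsilon_s$ has no solution $v$. The triangular form lets one solve recursively for the coordinates of $v$ from the last row upward; each step divides by a diagonal $x$ modulo the relation $xy=0$ and modulo $(0:_Rx)$, and the inclusion conditions $\binom{c_{1,j}}{\vdots}(0:_Rx)\subseteq \Image(\cdots)$ from Proposition \ref{matrix} are exactly what is needed to keep the back-substitution consistent. Carrying the recursion through should force $y\in\m\cdot\m + (x)=\m^2+(x)$ at the decisive coordinate, contradicting the minimal-generator hypothesis on $x,y$. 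I expect the bookkeeping of these triangular back-substitutions, and verifying at each stage that the relevant entry stays inside $\m^2+(x)$, to be the main obstacle; the role of $xy=0$ is to kill the diagonal contribution of $x$ against the target $y$, and the role of minimality of $\{x,y\}$ is to make the resulting membership $y\in\m^2+(x)$ absurd.
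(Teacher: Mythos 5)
Your opening moves coincide with the paper's (assume $k\in\ext_R(R/(x))$, split $k$ off a module $M\in\filt_R^n(R/(x))$, and invoke the triangular presentation of Proposition \ref{matrix}), but there is a genuine gap at the parenthetical claim that ``the $c_{i,j}$ can be taken in $\m$''. This is not automatic, and proving it is the actual heart of the paper's proof: it is exactly where $xy=0$, the inclusion conditions of Proposition \ref{matrix}, and the hypothesis that $x,y$ are part of a minimal system of generators are used. Since $y\in(0:_Rx)$, the inclusion conditions produce elements $d_{i,j}$ with $c_{i,j}y=xd_{i,j}+c_{i,i+1}d_{i+1,j}+\cdots+c_{i,j-1}d_{j-1,j}$; a double induction (on $j$, descending on $i$) then gives $c_{i,j}y-xd_{i,j}\in\m^2$, and reading this relation in $\m/\m^2$, where $\overline{x},\overline{y}$ are part of a $k$-basis, forces $c_{i,j},d_{i,j}\in\m$. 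So your sketch misplaces the role of the inclusion conditions: they are not bookkeeping aids for the back-substitution, they are what rules out unit entries in the first place. Without this step a unit entry is perfectly possible --- the matrix $\left(\begin{smallmatrix}x&1\\0&x\end{smallmatrix}\right)$ presents $R/(x^2)$, which lies in $\filt_R^2(R/(x))$ whenever $(0:_Rx)\subseteq(x)$ --- and then the presentation is non-minimal, your split-off generator need not correspond to a unimodular column, and the recursion collapses.

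Two further points. First, splitting off $k$ does not give $y\epsilon_s\in\Image A$ for a standard basis vector while keeping $A$ triangular: the change of basis of the target destroys triangularity. What you actually get is $yu\in\Image A$ for some $u\notin\m R^n$ (unimodularity again uses minimality of the presentation, i.e.\ $c_{i,j}\in\m$), and the back-substitution must be run for general $u$: take $s$ largest with $u_s$ a unit, write $Av=yu$, and work upward from the bottom row to show $v_j\in\m$ for all $j>s$ (a unit $v_j$ would force $x\in\m^2$); row $s$ then yields $y\in(x)+\m^2$, contradicting the linear independence of $\overline{x},\overline{y}$ in $\m/\m^2$. This completion is fine, but you only gesture at it (``should force''), and it is precisely where you locate the difficulty yourself. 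Second, for comparison: once $c_{i,j}\in\m$ is established, the paper avoids back-substitution altogether --- it performs column operations pushing all off-diagonal entries into $I=(y,z_3,\dots,z_e)$, tensors with $R/I$ to get $M/IM\cong k^n$, and compares Betti numbers, $n-1=\beta_1^{R/I}(N/IN)\le\beta_1^R(N)=\beta_1^R(M)-e\le n-e$, concluding $e\le 1$, a contradiction. So the endgames genuinely differ (and yours can be made to work), but the decisive step your proposal assumes without proof is the one both arguments need.
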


\begin{proof}
Let $e$ be the embedding dimension of $R$.
We have $e\ge 2$, and write $\m=(x,y,z_3,\dots,z_e)$.
Let us assume that $k$ belongs to $\ext_R(R/(x))$.
We want to derive a contradiction.
By Proposition \ref{extfilt}, the module $k$ is isomorphic to a direct summand of a module $M\in\filt_R(R/(x))$.
We have an isomorphism $M\cong k\oplus N$ for some $R$-module $N$, and $M$ belongs to $\filt_R^n(R/(x))$ for some $n\ge 1$.
Proposition \ref{matrix} gives an exact sequence
\begin{equation}\label{moto}
\begin{CD}
R^n @>{\left(
\begin{smallmatrix}
x & c_{1,2} & \cdots & c_{1,n} \\
0 & \ddots & \ddots & \vdots \\
\vdots & \ddots & \ddots & c_{n-1,n} \\
0 & \cdots & 0 & x
\end{smallmatrix}
\right)}>> R^n @>>> M @>>> 0
\end{CD}
\end{equation}
of $R$-modules such that
$$
\left(
\begin{smallmatrix}
c_{1,j} \\
\vdots \\
c_{j-1,j}
\end{smallmatrix}
\right)(0:x)\subseteq\Image\left(
\begin{smallmatrix}
x & c_{1,2} & \cdots & c_{1,j-1} \\
0 & \ddots & \ddots & \vdots \\
\vdots & \ddots & \ddots & c_{j-2,j-1} \\
0 & \cdots & 0 & x
\end{smallmatrix}
\right)
$$
for all $2\le j\le n$.
Since $y$ is in $(0:x)$, there are elements $d_{1,j},\dots,d_{j-1,j}\in R$ such that
$$
\left(
\begin{smallmatrix}
c_{1,j}y \\
\vdots \\
c_{j-1,j}y
\end{smallmatrix}
\right)=\left(
\begin{smallmatrix}
x & c_{1,2} & \cdots & c_{1,j-1} \\
0 & \ddots & \ddots & \vdots \\
\vdots & \ddots & \ddots & c_{j-2,j-1} \\
0 & \cdots & 0 & x
\end{smallmatrix}
\right)\left(
\begin{smallmatrix}
d_{1,j} \\
\vdots \\
d_{j-1,j}
\end{smallmatrix}
\right).
$$
Hence the equality
$$
c_{i,j}y=xd_{i,j}+c_{i,i+1}d_{i+1,j}+\cdots+c_{i,j-1}d_{j-1,j}
$$
holds for $2\le j\le n$ and $1\le i\le j-1$.

We claim that the elements $c_{i,j},d_{i,j}$ belong to $\m$ for all $2\le j\le n$ and $1\le i\le j-1$.
Indeed, the hypothesis of induction on $j$ implies that $c_{i,l}$ is in $\m$ for $i+1\le l\le j-1$, and the assumption of descending induction on $i$ shows that $d_{l,j}$ is in $\m$ for $i+1\le l\le j-1$.
Hence we have $c_{i,j}y-xd_{i,j}\in\m^2$, which gives an equality
$$
\overline{c_{i,j}}\cdot\overline{y}-\overline{x}\cdot\overline{d_{i,j}}=\overline{0}
$$
in $\m/\m^2$.
Since $\overline{x},\overline{y}$ are part of a $k$-basis of $\m/\m^2$, we have $\overline{c_{i,j}}=\overline{d_{i,j}}=\overline{0}$ in $k$.
Therefore, $c_{i,j},d_{i,j}$ belong to $\m$, as desired.

By elementary column operations, the matrix $\left(
\begin{smallmatrix}
x & c_{1,2} & \cdots & c_{1,n} \\
0 & \ddots & \ddots & \vdots \\
\vdots & \ddots & \ddots & c_{n-1,n} \\
0 & \cdots & 0 & x
\end{smallmatrix}
\right)$ can be transformed into a matrix $\left(
\begin{smallmatrix}
x & b_{1,2} & \cdots & b_{1,n} \\
0 & \ddots & \ddots & \vdots \\
\vdots & \ddots & \ddots & b_{n-1,n} \\
0 & \cdots & 0 & x
\end{smallmatrix}
\right)$ such that each $b_{i,j}$ is an element of the ideal $I=(y,z_3,\dots,z_e)$.
We have an exact sequence
$$
\begin{CD}
R^n @>{\left(
\begin{smallmatrix}
x & b_{1,2} & \cdots & b_{1,n} \\
0 & \ddots & \ddots & \vdots \\
\vdots & \ddots & \ddots & b_{n-1,n} \\
0 & \cdots & 0 & x
\end{smallmatrix}
\right)}>> R^n @>>> M @>>> 0,
\end{CD}
$$
and applying $-\otimes_RR/I$ to this, we get an exact sequence
$$
\begin{CD}
(R/I)^n @>{\left(
\begin{smallmatrix}
x & 0 & \cdots & 0 \\
0 & \ddots & \ddots & \vdots \\
\vdots & \ddots & \ddots & 0 \\
0 & \cdots & 0 & x
\end{smallmatrix}
\right)}>> (R/I)^n @>>> M/IM @>>> 0.
\end{CD}
$$
Hence we have an isomorphism $M/IM\cong (R/I+(x))^n=k^n$.
Since $M/IM\cong k\oplus N/IN$, we see that $N/IN$ is isomorphic to $k^{n-1}$, and get an equality
\begin{equation}\label{a}
\beta_1^{R/I}(N/IN)=(n-1)\beta_1^{R/I}(k)
\end{equation}
of Betti numbers.
There is an exact sequence $R^{\beta_1^R(N)} \to R^{\beta_0^R(N)} \to N \to 0$ of $R$-modules, and tensoring $R/I$ with this gives an exact sequence $(R/I)^{\beta_1^R(N)} \to (R/I)^{\beta_0^R(N)} \to N/IN \to 0$ of $R/I$-modules.
It follows from this that
\begin{equation}\label{i}
\beta_1^{R/I}(N/IN)\le\beta_1^R(N).
\end{equation}
The isomorphism $M\cong k\oplus N$ shows
\begin{equation}\label{u}
\beta_1^R(M)=\beta_1^R(k)+\beta_1^R(N)=e+\beta_1^R(N).
\end{equation}
The existence of the exact sequence \eqref{moto} implies
\begin{equation}\label{e}
\beta_1^R(M)\le n.
\end{equation}
Since $\m/I=x(R/I)$ and $x\notin I$, we have
\begin{equation}\label{o}
\beta_1^{R/I}(k)=1.
\end{equation}
Using the (in)equalities \eqref{a}--\eqref{o}, we obtain
$$
n-1=(n-1)\beta_1^{R/I}(k)=\beta_1^{R/I}(N/IN)\le\beta_1^R(N)=\beta_1^R(M)-e\le n-e,
$$
whence $e\le 1$.
This is a desired contradiction; this contradiction completes the proof of the theorem.
\end{proof}

Let $R$ be an Artinian local ring.
Then, using the fact that every $R$-module $M$ is annihilated by the ideal $\m^{\ell(M)}$, we can check that the equality $\m^{\ell(R)-\edim R+1}=0$ holds.
(Here, $\ell(M)$ and $\edim R$ denote the length of $M$ and the embedding dimension of $R$, respectively.)
Recall that $R$ is called {\em stretched} if $\m^i\ne 0$ for all $i<\ell(R)-\edim R+1$, or equivalently, if $\m^{\ell(R)-\edim R}\ne 0$.

\begin{ex}
(1) Every Artinian Gorenstein local ring $R$ with $\m^3=0$ that is not a field is stretched.

(2) Let $k$ be a field, and let
$$
R=k[[x,y,z]]/(xy,xz,yz,x^3-y^2,x^3-z^2)
$$
be a residue ring of a formal power series ring over $k$.
Then $R$ is an Artinian Gorenstein local ring.
Since $\ell(R)=6$, $\edim R=3$ and $\m^3=(x^3)\ne 0$, the ring $R$ is stretched.
\end{ex}

Now we have a sufficient condition for $\mod R$ to have a nontrivial extension-closed subcategory.

\begin{cor}\label{sagnotes}
Let $R$ be a stretched Artinian Gorenstein local ring with $\edim R\ge 2$.
Then $\mod R$ has a nontrivial extension-closed subcategory.
\end{cor}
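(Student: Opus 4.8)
The plan is to reduce everything to Theorem \ref{main}: if I can exhibit elements $x,y$ that are part of a minimal system of generators of $\m$ with $xy=0$, then $\ext_R(R/(x))$ will be the desired subcategory. Indeed, since $x\notin\m^2$ we have $x\neq 0$, so $R/(x)$ is a nonzero nonfree module lying in $\ext_R(R/(x))$, whence this subcategory is neither $\zero$ nor $\add R$; and Theorem \ref{main} guarantees $k\notin\ext_R(R/(x))$, so it is not $\mod R$ either. Thus the whole problem becomes the purely ring-theoretic task of producing such a pair $x,y$.

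To find them I would first record the shape of a stretched Artinian Gorenstein ring. Writing $e=\edim R\ge 2$ and letting $s$ be the largest integer with $\m^s\neq 0$, the stretched hypothesis forces the Hilbert function $1,e,1,1,\dots,1$, so that $\dim_k\m^i/\m^{i+1}=1$ for all $2\le i\le s$; in particular $\m^2$ is principal, say $\m^2=(g)$ with $g\notin\m^3$, and $\m^i/\m^{i+1}\cong k$ for every $i\ge 2$. The Gorenstein condition makes the socle one-dimensional, which together with $e\ge 2$ rules out $\m^2=0$, so $s\ge 2$. I would then consider the symmetric $k$-bilinear multiplication form $b\colon\m/\m^2\times\m/\m^2\to\m^2/\m^3\cong k$ given by $b(\overline u,\overline v)=\overline{uv}$. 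Producing the pair amounts to finding independent $\overline x,\overline y$ with $b(\overline x,\overline y)=0$ and then \emph{lifting} the resulting relation $xy\in\m^3$ to the genuine relation $xy=0$.

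When $e\ge 3$ both steps go through uniformly in all characteristics. For the first step, the orthogonal space $\overline x^{\perp}$ of any $\overline x$ has dimension at least $e-1\ge 2$, so it contains a vector independent of $\overline x$; this already settles the subcase $s=2$, where $\m^3=0$ and $b(\overline x,\overline y)=0$ literally means $xy=0$. For $s\ge 3$ I would additionally use the linear functional $c\colon\m/\m^2\to\m^3/\m^4\cong k$, $c(\overline u)=\overline{ug}$, which is nonzero because $c\equiv 0$ would give $\m^3=\m g\subseteq\m^4$ and hence $\m^3=0$ by Nakayama. Choosing $\overline x$ with $c(\overline x)\neq 0$ yields $x\m^2=(xg)=\m^3$, and then any $y_0$ with $\overline{y_0}\in\overline x^{\perp}\setminus\langle\overline x\rangle$ satisfies $xy_0\in\m^3=x\m^2$, so $xy_0=xw$ for some $w\in\m^2$; replacing $y_0$ by $y:=y_0-w$ leaves the class $\overline y=\overline{y_0}$ unchanged and gives a minimal generator independent of $x$ with $xy=0$. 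In either subcase Theorem \ref{main} applies.

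The step I expect to be the main obstacle is the degenerate case $e=2$, where the construction genuinely breaks down: in characteristic two the ring $k[[x,y]]/(x^2,y^2)$ admits \emph{no} two independent generators with vanishing product, since its multiplication form is the nondegenerate alternating (hyperbolic) form and every radical-free choice of $\overline x$ has $\overline x^{\perp}=\langle\overline x\rangle$. Here I would abandon Theorem \ref{main} and argue differently, using that an Artinian Gorenstein ring of codimension two is automatically a complete intersection; thus $R$ is a complete intersection, and since $e=2$ prevents $R$ from being a hypersurface, Corollary \ref{bcor} furnishes a nontrivial extension-closed subcategory directly. So the delicate points are precisely the lifting from $\m^3$ to $0$ for $s\ge 3$ and the characteristic-two degeneracy at $e=2$, the latter being exactly where one is forced off Theorem \ref{main} and onto the complete intersection case.
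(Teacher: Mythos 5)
Your proposal is correct, but it takes a genuinely different route from the paper's own proof. The paper splits by socle degree: when $\edim R<\ell(R)-2$ (socle degree $s\ge 3$) it simply cites Sally's structure theorem \cite[Theorem 1.1]{S} to produce the pair $x,y$ with $xy=0$, and when $\m^3=0$ it runs a dichotomy on whether $(0:x)\subseteq(x)+\m^2$ for $x\in\m\setminus\m^2$: if not, a pair exists and Theorem \ref{main} applies; if so, the Gorenstein double-annihilator property forces $(0:x)=(x)$, giving a periodic free resolution of $R/(x)$, so the (extension-closed) subcategory of modules with bounded Betti numbers is nontrivial. You split instead by embedding dimension: for $e\ge 3$ you replace the citation of Sally's theorem by a self-contained construction via the multiplication pairing on $\m/\m^2$, together with the lifting trick that a choice of $\overline x$ with $xg\notin\m^4$ gives $x\m^2=\m^3$ (your Nakayama arguments here are correct, as is the Hilbert-function computation $1,e,1,\dots,1$, which is exactly the elementary part of Sally's analysis), converting $xy_0\in\m^3$ into an honest relation $xy=0$; for $e=2$ you invoke the classical Hilbert--Burch/Serre fact that grade-two Gorenstein ideals are complete intersections (legitimate, since an Artinian local ring is complete and admits a Cohen presentation of codimension $e$) and fall back on Corollary \ref{bcor}. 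Both case divisions are exhaustive, and your degenerate example $k[[x,y]]/(x^2,y^2)$ in characteristic two, which you correctly show admits no qualifying pair, lands in the paper's bounded-Betti subcase and in your complete-intersection case, so the two proofs agree on where Theorem \ref{main} must be abandoned. What each buys: your $e\ge 3$ argument removes the external dependence on Sally's theorem and works over any residue field, whereas your $e=2$ fallback is heavier than the paper's, since Corollary \ref{bcor} rests on Bergh's theorem on complexes of finite complete intersection dimension, while the paper's $\m^3=0$ argument is elementary, characteristic-free, and needs only Tate's result that bounded Betti numbers for $k$ force a hypersurface.
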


\begin{proof}
If $\edim R<\ell(R)-2$, then by \cite[Theorem 1.1]{S} there exist elements $x,y\in R$ with $xy=0$ which form part of minimal system of generators of $\m$, and Theorem \ref{main} shows that $\ext_R(R/(x))$ is a nontrivial extension-closed subcategory of $\mod R$.

Let $\edim R\ge\ell(R)-2$.
Then we have $\m^3=0$.
Take an element $x\in\m\setminus\m^2$.

First, assume that $(0:x)$ is not contained in $(x)+\m^2$.
Then there exists an element $y\in (0:x)$ which does not belong to $(x)+\m^2$, and we see that $\overline{x},\overline{y}$ form part of a $k$-basis of $\m/\m^2$.
Hence $x,y$ are part of a minimal system of generators of $\m$ with $xy=0$, and the assertion follows from Theorem \ref{main}.

Next, assume that $(0:x)$ is contained in $(x)+\m^2$.
Then we have
$$
(x)\overset{\rm (a)}{=}(0:(0:x))\supseteq(0:(x)+\m^2)=(0:x)\cap(0:\m^2)\overset{\rm (b)}{=}(0:x).
$$
Here, the equality (a) follows from the double annihilator property (cf. \cite[Exercise 3.2.15]{BH}), and (b) from the inclusion $(0:\m^2)\supseteq\m$.
Suppose that $(0:x)\ne(x)$.
Then we have $x\m\subseteq\m^2\subseteq(0:x)\subsetneq(x)$ and $\ell_R((x)/x\m)=1$, which imply $x\m=\m^2=(0:x)$.
Hence $\m\subseteq(0:\m^2)=(0:(0:x))=(x)$, which contradicts the assumption that $\edim R\ge 2$.
Thus the equality $(0:x)=(x)$ holds, and there exists an exact sequence
$$
\cdots \overset{x}{\to} R \overset{x}{\to} R \overset{x}{\to} R \to R/(x) \to 0
$$
of $R$-modules.
This implies that $R/(x)$ belongs to the subcategory $\X$ of $\mod R$ consisting of all $R$-modules with bounded Betti numbers, which is extension-closed.
Hence $\X$ is neither $\zero$ nor $\add R$, and we also have $\X\ne\mod R$ because $R$ is not a hypersurface by the assumption that $\edim R\ge 2$ again.
Therefore $\X$ is a nontrivial extension-closed subcategory of $\mod R$.
\end{proof}

We can guarantee that our Conjecture \ref{mainc} holds true for a stretched Artinian Gorenstein local ring.
The following result follows from Proposition \ref{mainp} and Corollary \ref{sagnotes}.

\begin{cor}\label{strcor}
Let $R$ be a stretched Artinian Gorenstein local ring.
Then the following are equivalent.
\begin{enumerate}[\rm (1)]
\item
$R$ is an Artinian hypersurface.
\item
$\mod R$ has only trivial extension-closed subcategories.
\end{enumerate}
\end{cor}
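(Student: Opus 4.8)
The plan is to read the equivalence off directly from the two facts already established, Proposition \ref{mainp} and Corollary \ref{sagnotes}, treating the two implications separately and reducing the whole matter to whether $\edim R$ is at most $1$ or at least $2$. The implication $(1)\Rightarrow(2)$ needs nothing beyond Proposition \ref{mainp}, and not even the stretchedness hypothesis: if $R$ is an Artinian hypersurface, that proposition gives at once that $\mod R$ has only trivial extension-closed subcategories.

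For $(2)\Rightarrow(1)$ I would argue contrapositively, and the single point that needs a small verification is the translation of condition $(1)$ into a bound on the embedding dimension. Since $R$ is Artinian Gorenstein, I claim that $R$ is a hypersurface whenever $\edim R\le 1$: if $\edim R=0$ then $R$ is a field, while if $\edim R=1$ then $\m$ is principal, so the Cohen structure theorem presents $R$ as a residue ring of a discrete valuation ring modulo a power of a uniformizer, which is a hypersurface. Contrapositively, if $R$ is not an Artinian hypersurface then $\edim R\ge 2$. The standing hypothesis that $R$ is a stretched Artinian Gorenstein local ring then lets me invoke Corollary \ref{sagnotes}, which produces a nontrivial extension-closed subcategory of $\mod R$; hence $(2)$ fails, which is exactly the contrapositive of $(2)\Rightarrow(1)$.

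I do not anticipate a real obstacle, since the statement is a formal consequence of the earlier results. The only step deserving attention is the elementary observation that, for an Artinian Gorenstein ring, being a hypersurface is the same as having embedding dimension at most $1$; this is what makes the hypothesis $\edim R\ge 2$ of Corollary \ref{sagnotes} dovetail with the negation of condition $(1)$, and it is the hinge on which the proof turns.
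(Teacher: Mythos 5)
Your proof is correct and is exactly the paper's argument: the author derives Corollary \ref{strcor} directly from Proposition \ref{mainp} (for $(1)\Rightarrow(2)$) and Corollary \ref{sagnotes} (for the contrapositive of $(2)\Rightarrow(1)$), with the same implicit reduction that a non-hypersurface Artinian Gorenstein local ring has $\edim R\ge 2$. Your explicit verification of that reduction via the Cohen structure theorem is a sound spelling-out of what the paper leaves unsaid.
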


We end this paper by posing a question.

\begin{ques}
An extension-closed subcategory of $\mod R$ is called {\em resolving} if it contains $R$ and is closed under syzygies.
Does the assumption of Theorem \ref{main} imply that $k$ does not belong to the smallest resolving subcategory of $\mod R$ containing $R/(x)$?
\end{ques}

\section*{Acknowledgments}

The author is indebted to Shiro Goto, Petter Andreas Bergh and Kei-ichiro Iima for their valuable comments and useful suggestions.


\end{document}